\newtheorem{theorem}{Theorem}[section]
\newtheorem{prob}{Problem}[section]
\newtheorem{lemma}{Lemma}[section]
\newtheorem{conj}{Conjecture}[section]
\newtheorem{claim}{Claim}[section]
\newtheorem{definition}{Definition}[section]
\newcommand{\ex}{\mathrm{ex}}
\title{Planar Tur\'an number for balanced double stars}
\author{
Xin Xu\thanks{School of Sciences, North China University of Technology, Beijing, China.}
\and Qiang Zhou\thanks{Academy of Mathematics and Systems Science, Chinese Academy of Sciences, Beijing, China, and University of Chinese Academy of Sciences, Beijing, China.}
\and Tong Li\footnotemark[2]
\and Guiying Yan\footnotemark[2]}
\date{}
\begin{document}
\begin{sloppypar}

\maketitle

\begin{abstract}
Planar Tur\'an number, denoted by $\ex_{\mathcal{P}}(n,H)$, is the maximum number of edges in an $n$-vertex planar graph which does not contain $H$ as a subgraph. Ghosh, Gy\H{o}ri, Paulos and Xiao initiated the topic of the planar Tur\'an number for double stars. For balanced double star, $S_{3,3}$ is the only remaining graph need to be considered. In this paper, we give the exact value of $\ex_{\mathcal{P}}(n,S_{3,3})$, forcing the planar Tur\'an number for all balanced double stars completely determined.\\
\textbf{Keywords:} Planar Tur\'an number, Balanced double stars, Extremal graphs.
\end{abstract}

\maketitle

\section{Introduction}

All graphs considered in this paper are finite, undirected and simple. Let $V(G)$, $v(G)$, $E(G)$, $e(G)$, $\delta(G)$ and $\Delta(G)$ denote the vertex set, number of vertices, edge set, number of edges, minimum degree and maximum degree of a graph $G$, respectively. For any subset $S\subset V(G)$, the subgraph induced on $S$ is denoted by $G[S]$. We denote by $G\backslash S$ the subgraph induced on $V(G)\backslash S$. If $S=\{v\}$, we simply write $G\backslash v$. We use $e[S,T]$ to denote the number of edges between $S$ and $T$, where $S$, $T$ are subsets of $V(G)$.

Let $H$ be a graph, a graph is called $H$-free if it does not contain $H$ as a subgraph. One of the most classical problems in extremal graph theory, nowadays so-called Tur\'an-type
problem is:

\begin{prob}[Tur\'an Problem]\label{TuranProblem}
    What is the maximum number of edges in an $n$-vertex $H$-free graph $G$?
\end{prob}

We use $\ex(n,H)$ to denote the maximum number of edges in an $n$-vertex $H$-free graph. In 1941, Tur\'an~\cite{turan} gave the exact value of $\ex(n,K_{r})$ and the extremal graph, where $K_{r}$ is a complete graph with $r$ vertices. Later in 1946, Erd\H{o}s and Stone~\cite{erdos1946} extended this result by replacing $K_{r}$ by an arbitrary graph $H$ and showed that $\ex(n,H)=(1-\frac{1}{\chi(H)-1}+o(1))\binom{n}{2}$, where $\chi(H)$ denotes the chromatic number of $H$. This is latter called the ``fundamental theorem of extremal graph theory"~\cite{bollobs2002}.

In 2016, Dowden~\cite{dowden2016} initiated the study of Tur\'an-type problems when host graphs are planar graphs:

\begin{prob}[Planar Tur\'an Problem]\label{PlanarTuranProblem}
    What is the maximum number of edges in an $n$-vertex $H$-free planar graph $G$?
\end{prob}

We use $\ex_{\mathcal{P}}(n,H)$ to denote the maximum number of edges in an $n$-vertex $H$-free planar graph. Dowden studied the planar Tur\'an number of $C_{4}$ and $C_{5}$, where $C_{k}$ is a cycle with $k$ vertices. Ghosh, Gy\H{o}ri, Martin, Paulos and Xiao~\cite{ghosh2022c6} gave the exact value for $C_{6}$. Shi, Walsh and Yu~\cite{shi2023c7}, Gy\H{o}ri, Li and Zhou~\cite{győri2023c7} gave the exact value for $C_{7}$. The planar Tur\'an number of $C_{k}$ is still unknown for $k\geq 8$. Cranston, Lidick\'{y}, Liu and Shantanam~\cite{daniel2022counterexample} first gave both lower and upper bound for general cycles, Lan and Song~\cite{lan2022improved} improved the lower bound. Recently, Shi, Walsh and Yu~\cite{shi2023dense} improved the upper bound, Gy\H{o}ri, Varga and Zhu~\cite{győri2023new} gave a new construction and improved the lower bound. Lan, Shi and Song~\cite{lan2019hfree} gave a sufficient condition for graphs with planar Tur\'an number $3n-6$. We refer the interested readers to more results on paths, theta graphs and other graphs~\cite{lan2019shortpaths,lan2019thetafree,ghosh2023theta6,győri2022extremal,zhai2022,fang2023,fang2022intersecting,li2024,lan2024planar,du2021,győri2023k4c5k4c6}.

\begin{definition}
    A \textbf{double star} $S_{m,l}$ is the graph obtained by taking an edge $xy$ and joining $x$ with $m$ vertices, $y$ with $l$ vertices which are different from the $m$ vertices. 
\end{definition}

\begin{figure}[ht]
    \centering
    \includegraphics[height=4cm, width=7cm]{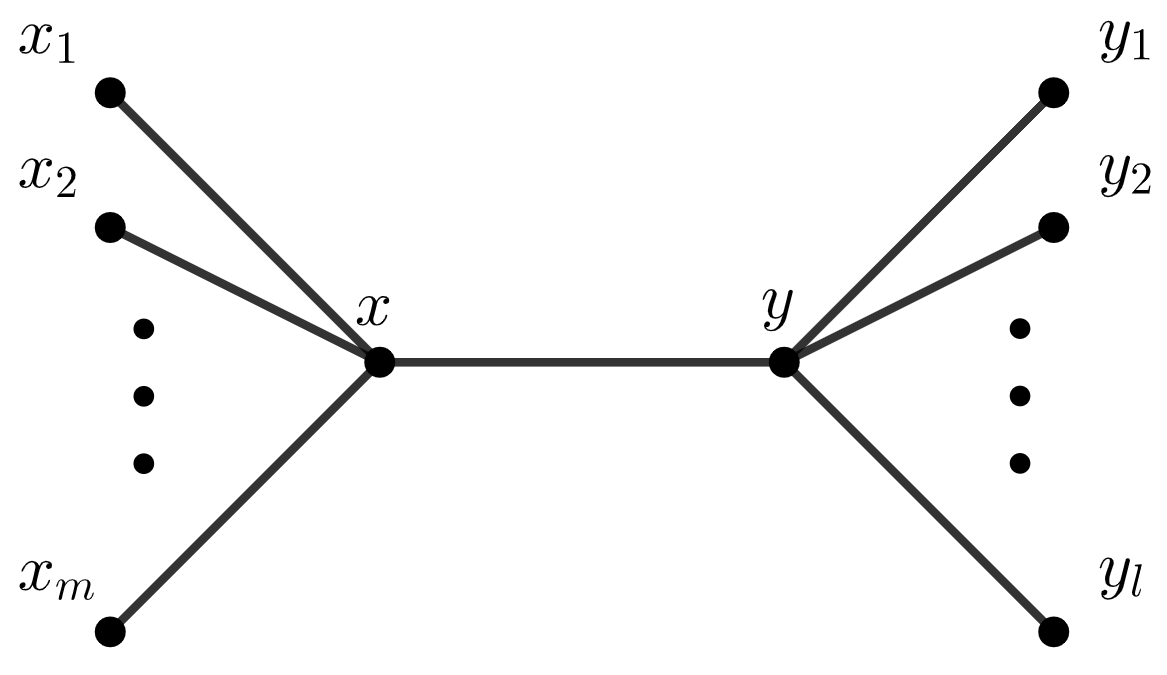}
    \caption{The double star $S_{m,l}$.}
    \label{double_star}
\end{figure}

In 2022, Ghosh, Gy\H{o}ri, Paulos and Xiao~\cite{ghosh2022planar} studied the planar Tur\'an number for $S_{2,2}$, $S_{2,3}$, $S_{2,4}$, $S_{2,5}$, $S_{3,3}$ and $S_{3,4}$. Moreover, they gave the exact value for $S_{2,2}$ and $S_{2,3}$. Later, The first author of this paper  improved the upper bound for $S_{2,5}$~\cite{xu2024improved}.

We say a double star is a \textbf{balanced double star} if $m=l$. For $m\geq 4$, $\ex_{\mathcal{P}}(n,S_{m,m})=3n-6$ since a double wheel graph (a graph with two non-adjacent vertices connecting to all vertices on the cycle $C_{n-2}$) does not contain $S_{m,m}$ as a subgraph. Since $S_{1,1}$ is a path on 4 vertices and every graph without a path on 4 vertices must be a planar graph, the planar Tur\'an number of $S_{1,1}$ is equal to its Tur\'an number. By the result of Faudree and Schelp~\cite{faudree1975}, $\ex_{\mathcal{P}}(n,S_{1,1})=\ex(n,S_{1,1})\leq n$ and the equality holds for $3|n$. Ghosh, Gy\H{o}ri, Paulos and Xiao~\cite{ghosh2022planar} proved that $\ex_{\mathcal{P}}(n,S_{2,2})=2n-4$ when $n\geq 16$  and $\lfloor5n/2\rfloor-5\leq \ex_{\mathcal{P}}(n,S_{3,3})\leq \lfloor5n/2\rfloor-2$ when $n\geq 3$. Moreover, they gave the following conjecture:

\begin{conj}\label{conj}
    \begin{align*}
        \begin{split}
            \ex_{\mathcal{P}}(n,S_{3,3}) = \left\{
                \begin{array}{ll}
                3n-6 & \text{if } 3\leq n \leq 7,\\
                16 & \text{if } n=8,\\
                18 & \text{if } n=9,\\
                \lfloor5n/2\rfloor-5 & \text{otherwise}.
                \end{array}
            \right.
        \end{split}
    \end{align*}
\end{conj}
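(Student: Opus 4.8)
The plan is to verify the formula regime by regime. For $n\le 7$ there is nothing to prove: $S_{3,3}$ has eight vertices, so every $n$-vertex graph is $S_{3,3}$-free and $\ex_{\mathcal{P}}(n,S_{3,3})=3n-6$, attained by a maximal planar graph. For $n\in\{8,9\}$ the problem is finite, because $|(N(x)\cup N(y))\setminus\{x,y\}|\le n-2\le 7$ forces an $8$- or $9$-vertex graph to be $S_{3,3}$-free exactly when every edge $xy$ has $\min\{\deg x,\deg y\}\le 3$ or $x,y$ have a common non-neighbour (two of them if $n=9$); I would read off the lower bounds from the square antiprism ($n=8$, sixteen edges) and from a suitable $9$-vertex planar graph ($18$ edges), and exclude seventeen, resp.\ nineteen, edges by a short case analysis of the degree sequence, using that a vertex of degree $n-2$ forces all its neighbours to have degree $\le 3$. (Both of these exceed $\lfloor 5n/2\rfloor-5$ by exactly $1$; this does not persist because an $m$-vertex antiprism has $2m$ edges, which beats $\lfloor 5m/2\rfloor-5$ only at $m=8$ and only ties it at $m=10$.)

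For $n\ge 10$ the lower bound is the graph obtained from $K_{2,n-2}$ by adding a near-perfect matching to the part of size $n-2$; drawing $K_{2,n-2}$ with hubs $a,b$ on opposite sides and the remaining vertices in a row, each matching edge fits inside one of the quadrilateral faces, so the graph is planar, it has $2(n-2)+\lfloor(n-2)/2\rfloor=\lfloor 5n/2\rfloor-5$ edges, and it is $S_{3,3}$-free since $a,b$ are its only vertices of degree $\ge 4$ and they are non-adjacent.

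The upper bound for $n\ge 10$ I would prove by induction on $n$. Deleting a vertex of degree $\le 2$ loses at most $2$ edges while $\lfloor 5n/2\rfloor-\lfloor 5(n-1)/2\rfloor\ge 2$, so one may assume $\delta(G)\ge 3$; a cut vertex is dealt with by splitting $G$ at it, with some care (a cut vertex of large degree would itself create an $S_{3,3}$, which prevents the two sides from both being dense). Thus assume $G$ is $2$-connected, so all faces are cycles and $e=n+f-2$ by Euler. Writing $\ell(f)$ for the length of a face, the target $e\le\lfloor 5n/2\rfloor-5$ is then equivalent to $\sum_v(5-\deg v)\ge 10$, equivalently to $\sum_f(2\ell(f)-6)\ge n-2$, equivalently to $f\le\tfrac32(n-2)$ — and the floor in the answer is automatic, since $\sum_v(5-\deg v)\equiv n\pmod 2$, so $\ge 10$ already gives $\ge 11$ when $n$ is odd. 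The two structural facts I would use are: (i) every edge $xy$ with $\deg x,\deg y\ge 4$ has at least $\deg x+\deg y-7$ common neighbours, whence $\max\{\deg x,\deg y\}\le 6$; consequently any vertex of degree $\ge 7$ has all neighbours of degree $3$, these vertices form an independent set, and a planar bipartite estimate gives $|V_3|\ge\tfrac73|V_{\ge 7}|$; and (ii) locally dense pieces are constrained — a degree-$3$ vertex on three triangular faces, or a separating triangle, produces a $K_4$ whose vertices then have bounded degree by (i), which limits how such pieces accumulate.

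The heart of the argument is a discharging proof of $\sum_f(2\ell(f)-6)\ge n-2$: assign each face $f$ the non-negative charge $2\ell(f)-6$ (triangular faces are inert) and each vertex charge $0$, and redistribute the face charges onto vertices; since the total is preserved, it suffices to deliver at least $1$ unit to all but two vertices — which is exactly what happens in the extremal graph, where each matched vertex collects $1$ from an incident quadrilateral and the hubs collect nothing. The rules send the deficiency of a non-triangular face to the low-degree vertices on it; a degree-$3$ vertex, being on at most two triangular faces, lies on a non-triangular face and collects from it, a vertex of degree $\ge 7$ is covered by its many degree-$3$ neighbours via $|V_3|\ge\tfrac73|V_{\ge 7}|$, and a vertex of degree $4,5,6$ whose incident faces are nearly all triangles is, by (i), forced to sit on an edge that would complete an $S_{3,3}$ unless it has enough long incident faces to collect from. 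I expect the main obstacle to be precisely this local bookkeeping: $S_{3,3}$-freeness still permits quite dense local configurations — antiprism-type patches, $K_4$'s with a degree-$3$ vertex inside, degree-$4$ vertices at the hub of a $4$-wheel with low-degree rim — and the discharging rules together with the accompanying case analysis across the degree classes $3,4,5,6$ must be arranged so that none of these defeats $f\le\tfrac32(n-2)$; managing the parity-sensitive configurations is the most delicate point, and it is also what pins down the sporadic values at $n=8,9$.
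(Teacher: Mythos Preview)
Your plan is quite different from the paper's. The paper never invokes Euler's formula or discharging; instead it decomposes $G$ into ``star-blocks'' --- local configurations around vertices of degree $\ge 5$, classified as $k^+$-$3^-$ stars, $6$-$6$/$6$-$5$/$6$-$4$/$5$-$5$ edges, $5$-$4$-$5$ paths, and $5$-$4^-$ stars --- and through an extensive case analysis proves that each block $B$ satisfies a weight bound of the shape $w(B)\le\tfrac52\,v(B)-\tfrac52$ (with adjusted constants when blocks overlap), then sums over blocks together with the remaining vertices of degree $\le 4$. A closing remark in the paper does sketch an induction of the flavour you begin with, but it proceeds by direct vertex-degree counting rather than face discharging, and even so requires computer verification of the conjecture up to $n=27$ as a base.

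Your discharging scheme has a genuine gap. You aim to prove $\sum_f(2\ell(f)-6)\ge n-2$ by delivering face charge so that all but two vertices receive $\ge 1$, checking this against the $K_{2,n-2}$-plus-matching extremal graph. But the pentagonal antiprism is also extremal at $n=10$: it is $4$-regular, planar, with $20=\tfrac{5\cdot 10}{2}-5$ edges, and $S_{3,3}$-free because every edge lies on a triangle, so every $4$-$4$ edge has a common neighbour. Its total face charge is exactly $8=n-2$, carried entirely by the two pentagons, and the graph is vertex-transitive --- no local rule can designate two ``hub'' vertices to receive $0$ while the remaining eight each receive $\ge 1$; any rule that succeeds here makes a global choice, which defeats the point of discharging. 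This reflects a broader difficulty: when $\Delta(G)\le 5$ your structural fact~(i) is nearly vacuous and the link between face lengths and $S_{3,3}$-freeness is weak, so the per-vertex target is not locally certifiable. The reduction to $2$-connectedness also needs real work --- splitting at a cut vertex can produce pieces with $n'\le 9$ where the bound exceeds $\lfloor 5n'/2\rfloor-5$, and one must argue that the $S_{3,3}$-condition across the cut prevents both pieces from being that dense --- which you acknowledge but do not carry out.
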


In this paper, we solve this conjecture by a new method and thus the planar Tur\'an number for all balanced double stars is determined:

\begin{theorem}\label{thm}
    For any integer $n\geq 3$, we have
    \begin{align*}
        \begin{split}
            \ex_{\mathcal{P}}(n,S_{m,m}) = \left\{
                \begin{array}{ll}
                n & \text{if } m=1 \text{ and } 3|n,\\
                n-1 & \text{if } m=1 \text{ and } 3\nmid n,\\
                2n-4 & \text{if } m=2 \text{ and } n\geq 16,\\
                \lfloor5n/2\rfloor-5 & \text{if } m=3 \text{ and }n\geq 10,\\
                3n-6 & \text{if }
                m\geq 4.
                \end{array}
            \right.
        \end{split}
    \end{align*}
\end{theorem}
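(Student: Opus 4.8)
The cases $m=1$ and $m\ge 4$ of Theorem~\ref{thm}, together with $m=2$ for $n\ge 16$, are accounted for above: $\ex_{\mathcal P}(n,S_{1,1})$ is the Faudree--Schelp bound~\cite{faudree1975}, $\ex_{\mathcal P}(n,S_{2,2})=2n-4$ for $n\ge 16$ and the lower bound $\ex_{\mathcal P}(n,S_{3,3})\ge\lfloor 5n/2\rfloor-5$ are due to Ghosh, Gy\H{o}ri, Paulos and Xiao~\cite{ghosh2022planar}, and $\ex_{\mathcal P}(n,S_{m,m})=3n-6$ for $m\ge 4$ is forced by the double wheel. So the whole theorem reduces to the single inequality
\[
 \ex_{\mathcal P}(n,S_{3,3})\ \le\ \lfloor 5n/2\rfloor-5 \qquad (n\ge 10),
\]
which the plan is to prove by taking an $S_{3,3}$-free plane graph $G$ on $n\ge 10$ vertices with $e(G)$ as large as possible and forcing $e(G)\le\lfloor 5n/2\rfloor-5$ by induction on $n$ (the values for $n\le 9$, already outside the range, are verified directly as base cases, and the disconnected case follows by summing the bounds for the components).

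The engine is a local lemma about one edge. If $uv\in E(G)$ with $d_u,d_v\ge 4$, then a copy of $S_{3,3}$ centred on $uv$ is exactly a choice of three neighbours of $u$ and three of $v$ that are pairwise distinct and distinct from $u,v$; a short counting argument shows such a choice exists unless the number $c(uv):=|N(u)\cap N(v)|$ of common neighbours satisfies both $c(uv)\ge\max\{d_u,d_v\}-3$ and $c(uv)\ge d_u+d_v-7$. Since trivially $c(uv)\le\min\{d_u,d_v\}-1$, the second inequality forces $\max\{d_u,d_v\}\le 6$; in particular a vertex of degree at least $7$ has every neighbour of degree at most $3$. When $d_v=6$ and $d_u\ge 4$ the two inequalities pin $c(uv)=d_u-1$, i.e.\ $N(u)\subseteq N[v]$, so if every neighbour of $v$ has degree at least $4$ then the component of $v$ sits inside $N[v]$ and has at most $7$ vertices. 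Finally $G[N(v)]$, drawn with the star at $v$, is outerplanar, hence has at most $2d_v-3$ edges; together with $c(uv)=\deg_{G[N(v)]}(u)$ this caps the number of neighbours of $v$ of degree at least $4$.

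Next come the reductions. One checks that deleting a vertex of degree at most $2$ (and, when $n$ is even, a vertex of degree exactly $3$) keeps the edge count above the extremal value for $n-1$ vertices, so in the even case we may assume $\delta(G)\ge 4$, and, after a similar argument, that $G$ is connected; the odd case needs slightly more care and is discussed below. With $\delta(G)\ge 4$, the local lemma kills vertices of degree $\ge 7$, and connectivity with $n\ge 8$ then kills vertices of degree $6$, so \emph{every vertex of $G$ has degree $4$ or $5$}; writing $n_i$ for the number of degree-$i$ vertices, $2e(G)=5n-n_4$, and the target becomes $n_4\ge 10$. Applying the local lemma to a degree-$5$ vertex $v$ with $k$ neighbours of degree $4$ gives $e(G[N(v)])\ge\lceil(15-k)/2\rceil$; since a maximal outerplanar graph on five vertices has only seven edges, $k\le 1$ forces $G[N(v)]$ to be a triangulated pentagon, which has two vertices of degree $2$, and these cannot be the degree-$5$ neighbours of $v$ — a contradiction. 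Hence every degree-$5$ vertex has at least two degree-$4$ neighbours, and summing over degree-$5$ and over degree-$4$ vertices yields $2n_5\le 4n_4$, i.e.\ $n_4\ge n/3$; this already settles all $n\ge 30$.

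The remaining work, and the main obstacle, is the finite range $10\le n\le 29$ together with the odd-$n$ bookkeeping — equivalently, understanding near-extremal graphs, which are not completely uniform (the formula jumps at $n=8,9$, and near-extremal graphs can be disconnected, e.g.\ two disjoint copies of the double wheel on $7$ vertices when $n=14$). The tool I would use is that around a degree-$5$ vertex $v$ every neighbour has at most one neighbour outside $N[v]$, so $N[v]$ is joined to the rest of $G$ by at most five edges and behaves as an almost-closed block; running a discharging argument in which each degree-$5$ vertex passes $\tfrac12$ to two chosen degree-$4$ neighbours, and then analysing how these blocks overlap and hang together, should push $n_4\ge 10$ down to $n=10$. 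The same block analysis should resolve the odd-$n$ step: a would-be odd-$n$ graph exceeding the bound, after deleting a degree-$3$ vertex, leaves a graph on $n-1$ vertices with the extremal number of edges, and the rigidity of such graphs (which the block structure makes explicit) should show that re-inserting a degree-$3$ vertex always creates a short edge — an edge $xy$ with $d_x=d_y=4$ and no common neighbour, or with $d_x=d_y=5$ and only two common neighbours — hence a copy of $S_{3,3}$. I expect the smallest cases near $n=10$ to require a hands-on or computer-assisted verification, and getting the additive constant to land on exactly $-5$ — equivalently, the essential uniqueness of the extremal configurations — to be the crux of the argument.
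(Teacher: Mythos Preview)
Your approach is genuinely different from the paper's, and the part you carry out is correct: the common-neighbour inequality $c(uv)\ge d_u+d_v-7$ for an $S_{3,3}$-free edge with $d_u,d_v\ge 4$, the outerplanarity of $G[N(v)]$, and the consequence that in the $\delta\ge 4$ regime every degree-$5$ vertex has at least two degree-$4$ neighbours (hence $n_4\ge n/3$) are all sound. This settles even $n\ge 30$ cleanly. It is worth noting that the paper itself sketches almost exactly this induction-based route in its final Remark section, where it partitions vertices by degree, uses $\delta\ge 3$ and the absence of $3$--$3$ edges, and gets the bound for $n\ge 28$ --- explicitly conceding that this strategy needs computer verification of the conjecture for $n\le 27$.

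That finite window is the genuine gap in your proposal. For $n=10$ your inequality only yields $n_4\ge 4$, i.e.\ $e(G)\le 23$, against a target of $20$; similar slack persists throughout $10\le n\le 29$, and your odd-$n$ plan (delete a degree-$3$ vertex, classify the resulting extremal graphs, and argue that re-insertion creates $S_{3,3}$) presupposes a structural classification you have not established. The paper's main contribution is precisely a device that closes this window without finite checking: it introduces \emph{star-blocks} (local configurations around vertices of degree $\ge 5$: the $5^{+}$--$3^{-}$ star, $6$--$6$/$6$--$5$/$6$--$4$/$5$--$5$ edges, the $5$--$4$--$5$ path, and the $5$--$4^{-}$ star), equips them with a weight $w(B)=w_0(B)+\tfrac{s}{2}+\tfrac{s'}{4}+\mathbf{1}_B$ correcting for shared peripheral vertices, and proves (Lemma~\ref{lemma_w}, via an exhaustive but finite case analysis with successive refinements of the star-block base) that $w(B)\le \tfrac{5}{2}v(B)-c$ with the right constants. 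Summing these weight bounds over a star-block partition $G=G_1+G_2$ and using a small auxiliary planar-bipartite-graph argument to control triple-shared vertices gives $w_0(G_1)\le \tfrac{5}{2}v(G_1)-5$ directly (Lemma~\ref{lemma_eq2}), and a separate but parallel count handles the one-block case (Lemma~\ref{lemma_eq1}). So where your plan pushes the difficulty into a finite but unresolved endgame, the paper absorbs it into a structured local-to-global weighting scheme that lands on the constant $-5$ for every $n\ge 10$.
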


\section{Definitions and Preliminaries}

Let $G$ be an $S_{3,3}$-free planar graph. For the sake of brevity and simplicity, we always omit the floor function.

\begin{definition}
A $\boldsymbol{k\text{-}l}$ \textbf{edge} is an edge whose end vertices are of degree $k$ and $l$. 
A $\boldsymbol{k\text{-}l\text{-}s}$ \textbf{path} is an induced path consisting of three vertices with degree $k$, $l$ and $s$.
\end{definition}

\begin{definition}
A $\boldsymbol{k\text{-}s}$ \textbf{star} is a subgraph in $G$ on $k+1$ vertices, where there is a central vertex connecting to the other $k$ vertices, and all other $k$ vertices have degree of $s$. 
We use $\boldsymbol{k^{+}\text{-}s^{-}}$ \textbf{star} to denote the one with the central vertex of degree at least $k$ and the other $k$ vertices of degree at most $s$.
\end{definition}


\begin{definition}
Given two subgraphs $H, H'\subset G$, we use $\boldsymbol{H+H'}$ to denote the subgraph induced on $V(H)\cup V(H')$. If $V(H')=\{v\}$, we abbreviate this as $H+v$.
\end{definition}

\begin{definition}
The \textbf{elementary star-block B} of $G$ is one of $5^{+}$-$3^{-}$ star, $5$-$4^{-}$ star, $6$-$6$ edge, $6$-$5$ edge, $6$-$4$ edge, $5$-$5$ edge, $5$-$4$-$5$ path in $G$. Given an elementary star-block B, a vertex $v\in V(G)\backslash V(B)$ is called \textbf{potential} if $N(v)\cap V(B) \neq \emptyset$ when $d(v)\leq 3$   or   $|N(v)\cap V(B)|=2$ when $d(v)=4$. If $B$ is an elementary star-block and $V_{p}$ is a set of potential vertices, then $B$ and $B+V_{p}$ are both called \textbf{star-blocks}. For any star-block $B$, the vertices in $\{v\in V(B)| N(v)\cap V(G\backslash B)\neq \emptyset \text{\ for\  } d(v)\leq 3  \text{\ and\ }|N(v)\cap V(G\backslash B)|=2 \text{\ for\ } d(v)=4\}$ are \textbf{peripheral} vertices.
\end{definition}

Note that if a graph has a $k$-$l$ edge with $k\geq 7$ and $l\geq 4$, then $G$ contains an $S_{3,3}$ as a subgraph. So all vertices of degree at least 5 must lie in some star-block defined above.
Next we ensure that for any vertex $v$ with $d(v)\geq 5$, there is exactly one star-block containing $v$. The star-block is the first one by checking in the order of $7^{+}$-$3^{-}$ star,  $6$-$6$ edge, $6$-$5$ edge, $6$-$4$ edge, $6$-$3^{-}$ star, $5$-$5$ edge, $5$-$4$-$5$ path, $5$-$4^{-}$ star.



\begin{definition}
   Let $H$ be a subgraph of $G$. The \textbf{star-block base} $\mathcal{B}$ of $H$ is the set consisting of  star-blocks satisfying:\\
   $(a)$ $V(H)=\bigcup\limits_{B\in \mathcal{B}}V(B)$;\\
   $(b)$ $\forall B, B'\in \mathcal{B}$, if $V(B)\cap V(B')\neq \emptyset$, then all common vertices are peripheral vertices in both $B$ and $B'$.  Moreover, the common peripheral vertices are called \textbf{shared vertices}.
\end{definition}
It should be noticed that for any shared vertex $v$, we have $2\leq d(v)\leq 4$. In fact, if $d(v)=1$, $v$ belongs to one star-block and can not be shared. If $v$ is a shared vertex with $d(v)\geq 5$, then an $S_{3,3}$ is easily found.


\begin{definition}
Let $G=G_{1}+G_{2}$. If $\mathcal{B}$ is a star-block base of $G_{1}$ and any vertex in $V(G_{2})$ has degree at most $4$ in $G$, then we say $G$ has a \textbf{star-block partition}.
Let $d_{\mathcal{B}}(v)$ be the number of star-blocks in base $\mathcal{B}$ containing $v$.
For any star-block $B\in \mathcal{B}$, let $d_{s}(B)$ denote the number of shared vertices of degree at most $3$ in $B$ and let $d_{s'}(B)$ denote the number of shared vertices of degree $4$ in $B$.
\end{definition}

It is easy to see that $G$ must have a star-block partition. Specially, if $\Delta(G)\leq 4$, then $G=G_{2}$.

\begin{definition}
Let $H$ be a subgraph of $G$. The \textbf{primary weight} of $H$, denoted by $\boldsymbol{w_{0}(H)}$, is defined as
$$\boldsymbol{w_{0}(H)}\coloneqq e(H)+\frac{1}{2}(e[H,G\backslash H])=\frac{1}{2}\sum_{v\in V(H)}d(v).$$
\end{definition}

\begin{definition}
Let $G$ have a star-block partition and a star-block base $\mathcal{B}$. For any $B\in \mathcal{B}$,
the \textbf{modified weight} of $B$, denoted by $\boldsymbol{w(B)}$, is defined as
$$\boldsymbol{w(B)}\coloneqq w_{0}(B)+\frac{s}{2}+\frac{s'}{4}+\mathbf{1}_{B},$$
where $s=d_{s}(B)$, $s'=d_{s'}(B)$, and
\begin{align*}
    \begin{split}
        \mathbf{1}_{B} = \left\{
        \begin{array}{ll}
        1 &\text{if } d_{\mathcal{B}}(v)=3 \text{ for some } v\in B,\\
        0 & \text{otherwise}.\\
        \end{array}
        \right.
    \end{split}
\end{align*}
\end{definition}

Obviously, if there is a star-block partition $G=G_{1}+G_{2}$, then $e(G)=w_{0}(G)=w_{0}(G_{1})+w_{0}(G_{2})$.

\begin{definition}
    Let $G$ have a star-block base $\mathcal{B}=\{B_{i}, i=1,2,\cdots, T\}$ and $\mathcal{B}'$ be the star-block base obtained from $\mathcal{B}$ satisfying: $(a)$ $|\mathcal{B}'|=|\mathcal{B}|$; $(b)$ for all $1\leq i\leq T$, there exist $B_{i}\in \mathcal{B}$ and $B'_{i}\in \mathcal{B}'$ such that $V(B_{i})\subset V(B'_{i})$ and $w(B'_{i})/v(B'_{i})\leq w(B_{i})/v(B_{i})$, where at least one inequality is strict.
    Then we say $\mathcal{B}'$ is the \textbf{refinement} of $\mathcal{B}$.
\end{definition}

For the sake of convenience in subsequent discussion, we categorize the star-blocks into three types:

\begin{itemize}
\item[$\bullet$] $\mathcal{B}_{0}\coloneqq \{B\in \mathcal{B}\ |\ d_{s}(B)+d_{s'}(B)=0\}$,

\item[$\bullet$] $\mathcal{B}_{1}\coloneqq \{B\in \mathcal{B}\ |\ d_{s}(B)+d_{s'}(B)\geq 1 \ and\  d_{\mathcal{B}}(v)\leq 2 \ for\ each\ v\in B\}$,

\item[$\bullet$] $\mathcal{B}_{2}\coloneqq \{B\in \mathcal{B}\ |\ d_{s}(B)+d_{s'}(B)\geq 1\ and\ d_{\mathcal{B}}(v)=3\ for\ some\ v\in B\}$.
\end{itemize}

We show that $w(B)$ can be constrained for each star-block in some base $\mathcal{B}$. 

\begin{lemma}\label{lemma_w}
Let $G$ be an $S_{3,3}$-free planar graph on $n$ vertices. Then there exists a star-block partition $G=G_{1}+G_{2}$ such that all star-blocks in the base $\mathcal{B}$ satisfying:
    \begin{align*}
        \begin{split}
            w(B) \leq \left\{
            \begin{array}{ll}
            \frac{5}{2}v(B)-\frac{5}{2} & \text{when }B\in \mathcal{B}_{0},\\
            \frac{5}{2}v(B)-\frac{5}{t} & \text{when }B\in \mathcal{B}_{1},\\
            \frac{5}{2}v(B)-1 & \text{when }B\in \mathcal{B}_{2},
            \end{array}
            \right.
        \end{split}
    \end{align*}
where  $t=|\mathcal{B}_{1}|+|\mathcal{B}_{2}|$.
\end{lemma}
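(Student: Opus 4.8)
The plan is to produce the base asserted by the lemma through an extremality argument built on the refinement operation, and then to verify the three inequalities one block at a time according to the type of the underlying elementary star-block. Two ingredients are used throughout. Planarity, applied via Euler's formula to the subgraph spanned by a star-block together with the potential vertices it has absorbed, bounds the number of edges among those absorbed vertices and hence their total contribution to $w_{0}$. And $S_{3,3}$-freeness supplies the local restrictions we need: there is no $k$-$l$ edge with $k\ge 7$ and $l\ge 4$; every shared vertex has degree in $\{2,3,4\}$; and whenever an edge or path of one of the edge/path types occurs as an elementary core, or a $5$-$4^{-}$ star occurs with high-degree leaves, its high-degree vertices have almost all their neighbours in one bounded common set, so that the block automatically comes with a bounded supply of low-degree neighbours eligible for absorption.

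First I would fix, among all star-block partitions $G=G_{1}+G_{2}$ and all star-block bases $\mathcal B$ of $G_{1}$, one that admits no refinement and, subject to this, minimises $t=|\mathcal B_{1}|+|\mathcal B_{2}|$ and then $\sum_{B\in\mathcal B}w(B)/v(B)$. Such a base exists: a star-block partition exists, and each refinement strictly decreases $\sum_{B}w(B)/v(B)$ (the number of blocks being preserved), a quantity bounded below that takes values in a set of bounded denominator, so every chain of refinements is finite. The whole proof is then the assertion that this extremal base already satisfies the three bounds.

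The heart of the verification is a core-plus-absorption estimate, carried out per type. For a $5^{+}$-$3^{-}$ star whose centre has degree $k$, the core has at most $k+1$ vertices and degree-sum at most $4k$, so $w_{0}\le 2k\le \tfrac52(k+1)-\tfrac52$ with slack $\tfrac k2\ge\tfrac52$; absorbing a potential vertex adds at most $2$ to $w_{0}$ but $\tfrac52$ to the target, so the bound only improves, and the slack pays for the corrections $\tfrac s2+\tfrac{s'}4+\mathbf 1_{B}$ once one bounds, from the degree restrictions, how many shared vertices and how many vertices lying in three blocks such a block can carry. The $5$-$4^{-}$ star, the three edge types and the $5$-$4$-$5$ path have little or no slack in their bare core---indeed the bare edge/path cores violate the bound outright, $w_{0}$ equalling $6,\tfrac{11}{2},5,5,7$ against a right-hand side of $\tfrac52$ or $5$---so here the absorption is indispensable: one uses $S_{3,3}$-freeness to exhibit the forced low-degree common neighbours, planarity to bound their degrees (a neighbour with too much structure outside the block creates an $S_{3,3}$), and then absorbs a carefully chosen selection of them; the delicate point is that absorbing one neighbour may disqualify another as a potential vertex, so the selection must be optimised. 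After this one checks $w_{0}(B)\le\tfrac52v(B)-\tfrac52$, hence a fortiori the weaker $\mathcal B_{2}$-bound, every shared vertex again contributing at most $\tfrac12$ out of the accumulated slack.

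For $B\in\mathcal B_{0}$ we have $s=s'=0$ and $\mathbf 1_{B}=0$, so the previous paragraph gives $w(B)=w_{0}(B)\le\tfrac52v(B)-\tfrac52$; for $B\in\mathcal B_{2}$ it gives $w(B)\le\tfrac52v(B)-1$. The remaining case, and the one I expect to be the main obstacle, is the $\mathcal B_{1}$-bound $w(B)\le\tfrac52v(B)-\tfrac5t$: for $t\ge 6$ it is weaker than $\tfrac52v(B)-1$ and follows from the core estimate, but for $t\in\{2,3,4,5\}$ it needs more than the per-block slack provides, and this is precisely where refinement enters. The plan is to show that a block $B\in\mathcal B_{1}$ failing $w(B)\le\tfrac52v(B)-\tfrac5t$ must lie in an equality configuration of the core estimate, to classify those configurations, and then to check that each such configuration, together with its sharing partner (which also lies in $\mathcal B_{1}\cup\mathcal B_{2}$), admits a refinement---namely, reassign the shared potential vertices so that the shared vertex is absorbed wholly into one side---thereby strictly decreasing $\sum_{B}w(B)/v(B)$ and, when $t$ is small, decreasing $t$ itself, which contradicts extremality. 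The technical core of the proof is thus this classification of tight edge/path/$5$-$4^{-}$ blocks and the construction of an improving refinement for each; by comparison the $5^{+}$-$3^{-}$ stars and the accounting of the $s$, $s'$ and $\mathbf 1_{B}$ terms are routine.
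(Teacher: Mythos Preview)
Your overall architecture---pick an extremal base and verify the three bounds type by type, absorbing forced low-degree neighbours into the edge/path cores---is the paper's approach. The substantive gap is in your mechanism for the $\mathcal B_{1}$-bound when $t$ is small.

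The paper never uses refinement to reduce $t$. In each case it instead proves a direct lower bound on $t$ in terms of $s=d_{s}(B)$: for a $6$-$6$ edge block, for example, $s\in\{1,2\}\Rightarrow t\ge 2$, $s\in\{3,4\}\Rightarrow t\ge 3$, $s=5\Rightarrow t\ge 4$, and analogous inequalities are checked in every other case. The reason is structural: a shared peripheral vertex $a_{i}$ in such a block already spends two of its at most three edges on $u$ and $v$, so it has exactly one edge leaving $B$ and hence lies in exactly one further block; the geometry of the configuration then limits how many of the $a_{i}$ can land in the same outside block. Once $t$ is bounded below by a function of $s$, the inequality $w(B)\le \tfrac52 v(B)-\tfrac5t$ drops out of the same arithmetic as the $\mathcal B_{0}$ case, the correction $\tfrac s2$ being paid for by the now-quantified $\tfrac5t$. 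This $s$-versus-$t$ counting is the idea you are missing.

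Your proposed substitute---``reassign the shared potential vertices so that the shared vertex is absorbed wholly into one side, thereby decreasing $t$''---is not available under the paper's definition of refinement, which requires $V(B_{i})\subset V(B'_{i})$ for every $i$ and $|\mathcal B'|=|\mathcal B|$; you may only enlarge blocks, never remove a vertex from one. Even under a looser notion, deleting a shared vertex from one block shrinks that block and can raise its ratio $w/v$, so your extremality hypothesis need not be violated. Separately, Euler's formula on the spanned subgraph is too coarse to replace the paper's structural input (an $S_{3,3}$-free $k$-$l$ edge forces at least $k+l-7$ common neighbours; each such common neighbour has at most one edge leaving $B$; and so on), and the $5$-$5$ edge case is far from routine---it occupies the bulk of the paper's proof, with multiple layers of subcases and several successive refinements $B\to B'\to B^{*}$.
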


This lemma is the highlight of this paper, which make it possible to prove the following two lemmas.

\begin{lemma}\label{lemma_eq1}
    Let $G$ be an $S_{3,3}$-free planar graph on $n\geq 7$ vertices. If $G$ contains only one star-block, then
    \begin{align*}
        \begin{split}
            e(G) \leq \left\{
            \begin{array}{ll}
                15 & \text{when }n=7,\\
                16 & \text{when }n=8,\\
                18 & \text{when }n=9,\\
                5n/2-5 & \text{when }n\geq 10.
            \end{array}
            \right.
        \end{split}
    \end{align*}

\end{lemma}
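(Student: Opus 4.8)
\emph{Proof plan.}
Since $G$ has exactly one star-block, its star-block base is $\mathcal{B}=\{B\}$. There is no second star-block with which $B$ could share a vertex, so $d_{s}(B)=d_{s'}(B)=0$ and $d_{\mathcal{B}}(v)\le 1<3$ for every $v\in B$; hence $B\in\mathcal{B}_{0}$ and $w(B)=w_{0}(B)$. Lemma~\ref{lemma_w} therefore gives $w_{0}(B)\le \tfrac52 v(B)-\tfrac52$. Write $G=G_{1}+G_{2}$ for the corresponding star-block partition with $G_{1}=B$; every vertex of $G_{2}$ has degree at most $4$ in $G$, so $w_{0}(G_{2})=\tfrac12\sum_{v\in V(G_{2})}d(v)\le 2v(G_{2})$. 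Using $e(G)=w_{0}(B)+w_{0}(G_{2})$ and $v(B)+v(G_{2})=n$ we obtain the master inequality
\[
e(G)\le \tfrac52 v(B)-\tfrac52+2v(G_{2})=\tfrac{5n}{2}-\tfrac52-\tfrac12 v(G_{2}).
\]

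If $v(G_{2})$ is moderately large this already suffices: since $e(G)$ is an integer, $v(G_{2})\ge 5$ gives $e(G)\le\lfloor 5n/2\rfloor-5$, which is at most the claimed bound for all $n\ge 7$; for the exceptional values, $v(G_{2})\ge 2$ gives $e(G)\le 16$ when $n=8$ and $v(G_{2})\ge 3$ gives $e(G)\le 18$ when $n=9$; moreover $n=7$ is trivial because $S_{3,3}$ has eight vertices (equivalently $e(G)\le 3n-6=15$). Hence we may assume $v(B)=n-v(G_{2})\ge n-4$, so that $B$ accounts for almost all of $G$.

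In this regime the elementary star-block contained in $B$ cannot be one of the edge or path types: a $6$-$6$, $6$-$5$, $6$-$4$ or $5$-$5$ edge, or a $5$-$4$-$5$ path, together with all of its potential vertices, spans at most $13$ vertices, so once $n\ge 18$ the elementary block must be a star with centre $u$ and $d=d(u)\ge 5$. For a star-based $B$ in which every leaf has degree at most $3$ (forced when $d\ge 6$, and also covering $d=5$ with all leaves of degree $\le 3$) one gets $\sum_{v\in V(B)}d(v)\le d+3d+4(v(B)-1-d)=4v(B)-4$, hence $w_{0}(B)\le 2v(B)-2$ and
\[
e(G)\le 2v(B)-2+2v(G_{2})=2n-2\le \tfrac{5n}{2}-5\qquad(n\ge 6);
\]
for a $5$-$4^{-}$ star ($d=5$, some leaf of degree $4$) the same count yields $\sum d(v)\le 4v(B)+1$, so $e(G)\le 2n\le \tfrac{5n}{2}-5$ for $n\ge 10$. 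This settles all $n\ge 18$, as well as the star-based case for every $n\ge 8$ (note $2n\le 16$ and $2n\le 18$ at $n=8,9$).

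What remains is a \emph{finite} verification: $8\le n\le 17$ with $v(G_{2})\le 4$ and $B$ built from an edge- or path-type elementary star-block, for which $n-4\le v(B)\le 13$, so $B$ is one of finitely many explicit configurations (a fixed small elementary block plus at most ten potential vertices). In each case one bounds $\sum_{v\in V(B)}d(v)$ directly from $S_{3,3}$-freeness — no vertex of degree $\ge 7$ is adjacent to one of degree $\ge 4$; the two ends of a $6$-$6$ edge have almost disjoint neighbourhoods; a degree-$4$ potential vertex meets the elementary block in exactly two vertices, and so on — so as to improve the estimate $w_{0}(B)\le \tfrac52 v(B)-\tfrac52$ by an extra $\tfrac12$, then adds $w_{0}(G_{2})\le 2v(G_{2})\le 8$ and, where convenient, uses the planar bound $e(G)\le 3n-6$. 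I expect the main obstacle to be exactly this last step: Lemma~\ref{lemma_w} is off by up to one edge from what is needed as soon as $v(G_{2})\le 4$, so in these small configurations that unit of slack must be recovered by hand, typically by exploiting the degree-$\le 3$ leaves of the star-block together with the planarity of $G$.
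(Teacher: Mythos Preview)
Your outline matches the paper's overall strategy: reduce via Lemma~\ref{lemma_w} to the master inequality $e(G)\le \tfrac{5}{2}n-\tfrac{5}{2}-\tfrac{1}{2}v(G_2)$, dispose of the star types by the easy bound $w_0(B)\le 2v(B)$ (or $2v(B)-2$), and then confront the edge/path types where Lemma~\ref{lemma_w} falls short by up to one edge. The reduction and the star case are fine, and your observation that the edge/path types force $v(B)$ bounded (so large $n$ is automatically a star) is correct---in fact the paper's refinements in Lemma~\ref{lemma_w} never produce an edge/path star-block with more than $9$ vertices, so your threshold $n\ge 18$ can be lowered to $n\ge 14$.

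The genuine gap is that you stop at ``finite verification'' for the edge/path types, and this is not a formality. The paper does \emph{not} verify finitely many graphs; instead it proves, type by type, the sharper uniform bound $w_0(B)\le \tfrac{5}{2}v(B)-\tfrac{v(B)-1}{2}=2v(B)+\tfrac12$, which immediately yields $e(G)\le 2n+\tfrac12$, hence $e(G)\le 2n$, and this single inequality covers all of $n=8,9$ and $n\ge 10$ at once. For the $6$-$6$, $6$-$5$, $6$-$4$ edges and the $5$-$4$-$5$ path this sharper bound falls out of the analysis already carried out inside the proof of Lemma~\ref{lemma_w} (the paper explicitly flags those ``specially, if $s=0$'' lines for later use). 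For the $5$-$5$ edge, however, it does not: the paper has to redo the entire case analysis from scratch (the Claim in Case~3 of the proof of Lemma~\ref{lemma_eq1}), and even then one exceptional configuration---three triangles on $uv$ with $b_1b_2\in E(G)$ and a cut-edge $a_2a'_2$---escapes the bound $w_0(B)\le 2v(B)+\tfrac12$ and must be handled by a separate argument using the cut-edge structure. Your proposal gives no indication of how to recover the missing half-unit in the $5$-$5$ case or how to detect and treat this exceptional subcase, and the hints you offer (``two ends of a $6$-$6$ edge have almost disjoint neighbourhoods'') point in the wrong direction: for $6$-$6$ the neighbourhoods are almost \emph{identical}, not disjoint. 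So the plan is sound but the part you defer is precisely where the work is.
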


\begin{lemma}\label{lemma_eq2}
Let $G$ be an $S_{3,3}$-free planar graph with a star-block partition $G=G_{1}+G_{2}$ and a star-block base $\mathcal{B}$. If $|\mathcal{B}|\geq 2$, then $e(G)\leq 5n/2-5$. 
\end{lemma}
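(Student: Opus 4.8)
The plan is to combine the weight bounds of Lemma~\ref{lemma_w} with the standard Euler-type bound for planar graphs, after first reducing to a situation where the star-block base has been refined so that every block meets its target bound. Recall that $e(G)=w_0(G_1)+w_0(G_2)$. I would begin by treating $G_2$ (the part where every vertex has degree at most $4$ in $G$): since the subgraph $G_2$ of the planar graph $G$ has $v(G_2)$ vertices and maximum degree $4$, and every edge incident to $V(G_2)$ is counted with weight $1$ or $1/2$ in $w_0(G_2)$, one gets $w_0(G_2)\le 2 v(G_2)$ directly from $\frac12\sum_{v\in V(G_2)}d(v)\le \frac12\cdot 4 v(G_2)$; one in fact wants the sharper $w_0(G_2)\le \frac52 v(G_2) - c$ for a suitable constant, which should follow from planarity of $G$ restricted to $V(G_2)$ together with a small correction for the edges crossing into $G_1$. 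The first step, then, is to pin down the exact inequality $w_0(G_2)\le \frac52 v(G_2)-\tfrac52$ (or the appropriate boundary variant), keeping careful track of shared vertices, which are double-counted between $G_1$ and $G_2$.

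Next I would sum the modified-weight bounds over the base $\mathcal{B}=\{B_1,\dots,B_T\}$. By construction $w_0(G_1)=\sum_i w_0(B_i) - (\text{overcount from shared vertices})$, and the definition of $w(B_i)$ is precisely rigged so that $\sum_i w(B_i)$ equals $w_0(G_1)$ plus the bookkeeping terms $\frac{s}{2}+\frac{s'}{4}+\mathbf 1_B$ that compensate for shared and triply-covered vertices; the key identity to verify is that $w_0(G_1)=\sum_{i} w(B_i) - R$ where $R\ge 0$ collects exactly the shared-vertex corrections, so that $w_0(G_1)\le \sum_i w(B_i)$. Applying Lemma~\ref{lemma_w} block by block gives
\[
w_0(G_1)\ \le\ \sum_{B\in\mathcal{B}_0}\Bigl(\tfrac52 v(B)-\tfrac52\Bigr)+\sum_{B\in\mathcal{B}_1}\Bigl(\tfrac52 v(B)-\tfrac5t\Bigr)+\sum_{B\in\mathcal{B}_2}\Bigl(\tfrac52 v(B)-1\Bigr),
\]
where $t=|\mathcal{B}_1|+|\mathcal{B}_2|$. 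The crucial cancellation is that the $|\mathcal{B}_1|$ many terms each contributing $-\frac5t$ together with the $|\mathcal{B}_2|$ terms each contributing $-1\le -\frac5t$ (using $t\ge |\mathcal B_2|$, hence $\frac5t\le\frac{5}{|\mathcal B_2|}$ — one has to check $5/t\le 1$ exactly when $t\ge 5$, so the small cases $t\le 4$ need separate attention) sum to at most $-5$, while the $\mathcal{B}_0$ blocks are even better. Hence $w_0(G_1)\le \frac52\sum_{B}v(B)-5$ provided $\mathcal{B}_1\cup\mathcal{B}_2\ne\emptyset$.

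The last step is to assemble the two pieces. Writing $n = v(G_1)+v(G_2) - (\#\text{shared vertices between }G_1,G_2)$ and $\sum_B v(B) = v(G_1) + (\#\text{vertices with }d_{\mathcal B}=2,3\text{ counted with multiplicity})$, I would carefully add $w_0(G_1)+w_0(G_2)$, using the $+\frac s2+\frac{s'}4$ slack already spent inside the $w(B)$ bounds to absorb the vertex-multiplicity overcount, and conclude $e(G)\le \frac52 n-5$. Two separate branches are needed: if $\mathcal{B}_1\cup\mathcal{B}_2=\emptyset$ then every block is in $\mathcal{B}_0$ and there are no shared vertices at all, so $G=G_1$ is a disjoint-type union of blocks each satisfying $w(B)=w_0(B)\le\frac52 v(B)-\frac52$, and one still needs $|\mathcal B|\ge 2$ to get an additive $-5$ rather than $-\frac52$ — here I would use that two disjoint planar pieces on $a$ and $b$ vertices have at most $(3a-6)+(3b-6)$ edges but more relevantly $\frac52 a-\frac52+\frac52 b-\frac52=\frac52 n-5$ directly.

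The main obstacle I anticipate is the bookkeeping around vertices with $d_{\mathcal B}(v)=3$ and around the interaction between $G_1$ and $G_2$ at shared vertices: one must make sure the slack terms $\frac s2,\frac{s'}4,\mathbf 1_B$ in the definition of $w(B)$ are each large enough to pay for the corresponding overcount but that we have not already spent that slack in proving Lemma~\ref{lemma_w}. A secondary nuisance is the small-$t$ regime ($t\in\{1,2,3,4\}$), where $\frac5t\ge1$ and the naive summation of the $-\frac5t$ terms already overshoots $-5$ only when $t=1$; for $t\in\{2,3,4\}$ we get $t\cdot\frac5t=5$ exactly on the nose, so no room is left for the $G_2$ correction and one must instead exploit that such a small base forces a very restricted global structure (few high-degree vertices), handled by an ad hoc planarity count. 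I would isolate that case as a short separate claim before the main computation.
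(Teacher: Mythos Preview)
Your overall framework---bound $w_0(G_2)\le 2v(G_2)$ trivially, then show $w_0(G_1)\le \tfrac52 v(G_1)-5$ by summing the Lemma~\ref{lemma_w} bounds---is exactly what the paper does. But the bookkeeping you flag as the ``main obstacle'' does not close the way you anticipate, and the fix requires an idea you have not mentioned.

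Concretely: write $r_1,r_2$ for the number of vertices with $d_{\mathcal B}(v)=2$ of degree $\le 3$ and $=4$ respectively, and $r_3$ for the number with $d_{\mathcal B}(v)=3$. The overcount identity is $\sum_B w_0(B)=w_0(G_1)+\tfrac32 r_1+2r_2+3r_3$, while the correction terms satisfy $\sum_B(\tfrac s2+\tfrac{s'}4+\mathbf 1_B)=r_1+\tfrac12 r_2+\tfrac32 r_3+t_2$, and $\sum_B v(B)=v(G_1)+r_1+r_2+2r_3$. When you assemble these with the Lemma~\ref{lemma_w} bounds, the $r_1$ and $r_2$ terms cancel exactly, but a residual $+\tfrac12 r_3$ survives: you get
\[
w_0(G_1)\ \le\ \tfrac52 v(G_1)+\tfrac12 r_3 - t_2 -\tfrac52 t_0-\tfrac{5t_1}{t}-t_2.
\]
The $\mathbf 1_B$ correction is \emph{not} large enough on its own to absorb the triply-shared vertices, contrary to what you hope. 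The paper closes this with a separate planarity argument: build the bipartite incidence graph between the $r_3$ triply-shared vertices and the $t_2$ blocks in $\mathcal B_2$; this graph is planar and every vertex on the $r_3$ side has degree exactly $3$, so $3r_3\le 2(r_3+t_2)-4$, i.e.\ $r_3\le 2t_2-4$. This gives $\tfrac12 r_3-t_2\le -2$, and since $t_2\ge 3$ whenever $t_2>0$, one gets $-2-t_2\le -5$.

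Your concern about the ``small-$t$ regime'' is a red herring. When $t_2=0$ one has $r_3=0$ and $\tfrac{5t_1}{t}=\tfrac{5t_1}{t_1}=5$ on the nose for every $t_1\ge 1$; no further slack from $G_2$ is needed, and you should not try to sharpen $w_0(G_2)\le 2v(G_2)$ to $\tfrac52 v(G_2)-\tfrac52$. The genuine difficulty is entirely in the $t_2>0$ branch, and it is resolved by the auxiliary planar bipartite graph, not by ad hoc structural analysis of a small base.
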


Together with the extremal graphs achieving the bound, Theorem~\ref{thm} can be deduced from these two lemmas, which will be proved in the following sections. 

\section{Proof of Lemma~\ref{lemma_w}}

Suppose that there exists a partition  
$G=G_{1}+G_{2}$ and a star-block base $\mathcal{B}$. We show that if there exists a star-block $B\in \mathcal{B}$ which does not satisfy the corresponding upper bound, we can construct a refinement $\mathcal{B}'$ such that for some $B'\in \mathcal{B}'$, $B\subset B'$ and $w(B')$ satisfies the bound. 

There are several different star-blocks in $G$, such as $5^{+}$-$3^{-}$ star, $6$-$6$ edge, $6$-$5$ edge, $6$-$4$ edge, $5$-$5$ edge, $5$-$4$-$5$ path, $5$-$4^{-}$ star and their variant forms.

Now we consider each case in turn. Recall that $s$ the number of shared vertices of degree at most $3$ and $s'$ denote the number of shared vertices of degree $4$ in $B$, $\mathbf{1}_{B}$ the characteristic function for whether $B$ contains a $3$-degree vertex with $d_{\mathcal{B}}(v)=3$ or not.

\noindent\textbf{Case 1.} $B$ is a $5^{+}$-$3^{-}$ star.

Assume that $B$ is a $k$-$3^{-}$ star for $k\geq 5$. We have 
\begin{align*}
    w(B)&\leq \frac{1}{2}(k+3k)+\frac{s}{2}+\mathbf{1}_{B}\\
    &\leq \frac{5}{2}k+\mathbf{1}_{B}.
\end{align*}

If $\mathbf{1}_{B}=0$, then $w(B)\leq \frac{5}{2}k=\frac{5}{2}(k+1)-\frac{5}{2}$. If $\mathbf{1}_{B}=1$, then $w(B)\leq \frac{5}{2}k+1=\frac{5}{2}(k+1)-\frac{3}{2}<\frac{5}{2}(k+1)-1$. 

\noindent\textbf{Case 2.} $B$ is a $6$-$6$ edge. 

Let $uv$ be the $6$-$6$ edge. There exists at least $5$ triangles sitting on the edge $uv$, otherwise an $S_{3,3}$ is found in $G$. Let $a_{1}, a_{2}, a_{3}$, $a_{4}$ and $a_{5}$ be the vertices adjacent to both $u$ and $v$, as shown in Figure~\ref{fig_66}$(a)$. Let $S_{1}=\{a_{1}, a_{2}, a_{3}, a_{4}, a_{5}\}$ and $H_{1}=G[S_{1}]$.

\begin{figure}[ht]
  \centering
  \includegraphics[width=0.55\textwidth]{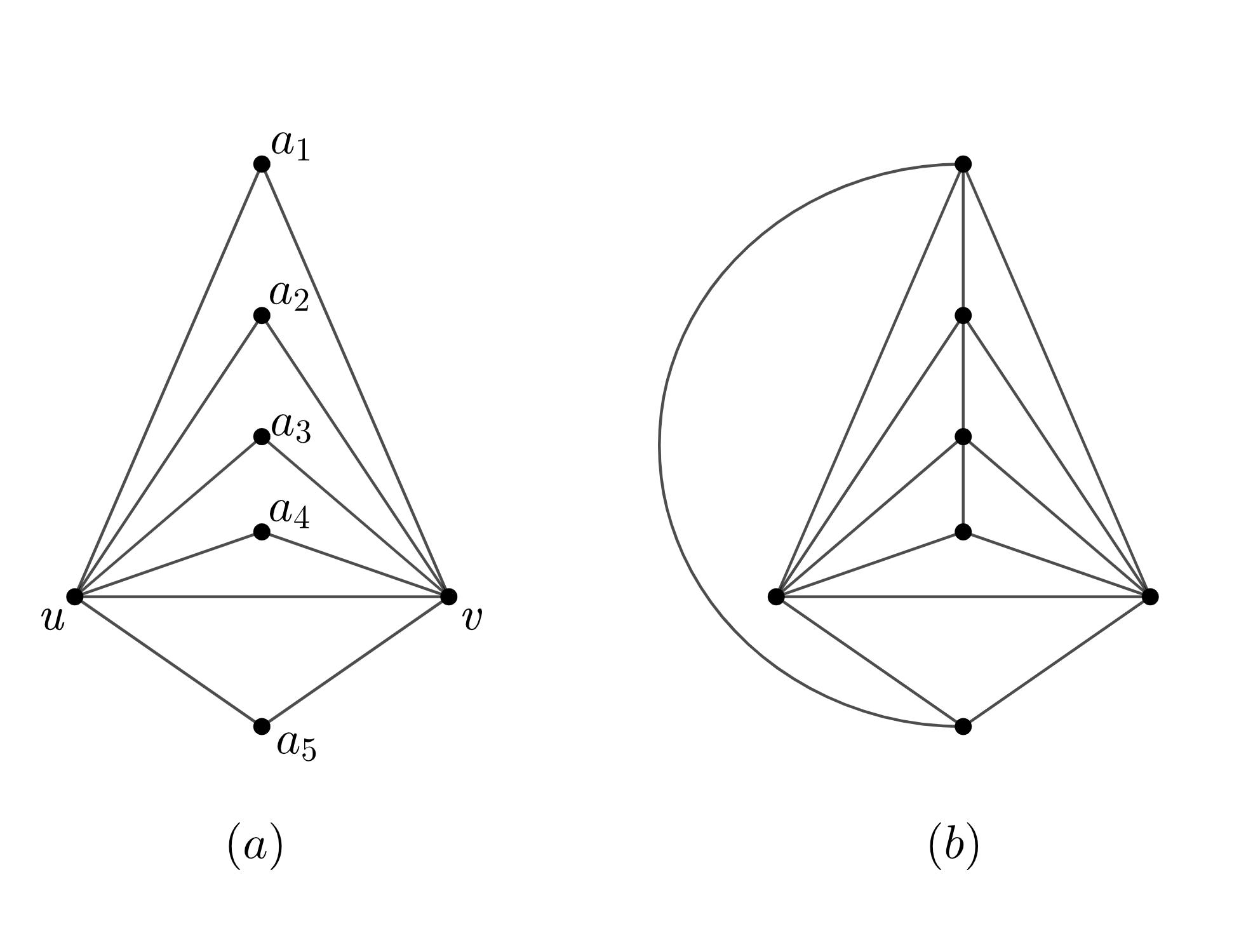}
  \caption{$(a)$ The star-block is a $6$-$6$ edge $uv$. $(b)$ The connected component on $7$ vertices.}
  \label{fig_66}
\end{figure}

Note that vertices in $S_{1}$ can form a path of length at most $4$ and each vertex in  $S_{1}$ can have at most one neighbor in $V(G)\backslash B$, otherwise $G$ contains an $S_{3,3}$. We have $e(H_{1})\leq 4$ and $e[H_{1},G\backslash B]\leq 5$. Moreover, for any $x\in S_{1}$, if $d(x)=4$, then $x$ can not have a neighbor outside of $B$, otherwise an $S_{3,3}$ is also found. So the degree of shared vertices in $S_{1}$ is exactly $3$, which implies $s'=0$. Each shared vertex is shared by exactly two star-blocks since it is adjacent to both $u$ and $v$, which means $\mathbf{1}_{B}=0$.

Hence we have $w(B)=11+e(H_{1})+\frac{1}{2}e[H_{1},G\backslash B]+\frac{s}{2}$.

Assume that $e(H_{1})=0$. Now we analyze the relationship between the number of shared vertices in $B$ and the number of star-blocks in $G$. It can be checked that
\begin{align*}
    \begin{split}
            t \geq \left\{
            \begin{array}{ll}
                0 & \text{when }s=0,\\
                2 & \text{when }s=1, 2,\\
                3 & \text{when }s=3, 4,\\
                4 & \text{when }s=5.
            \end{array}
            \right.
        \end{split}
\end{align*}

When $s=0$, $w(B)\leq 11+\frac{5}{2}= \frac{5}{2}\cdot 7-4<\frac{5}{2}\cdot 7-\frac{5}{2}$.

When $s=1, 2$, $w(B)\leq 11+\frac{5}{2}+\frac{s}{2}\leq \frac{5}{2}\cdot 7-3<\frac{5}{2}\cdot 7-\frac{5}{t}$. 

When $s=3, 4$, $w(B)\leq 11+\frac{5}{2}+\frac{s}{2}\leq \frac{5}{2}\cdot 7-2<\frac{5}{2}\cdot 7-\frac{5}{t}$.

When $s=5$, $w(B)\leq 11+\frac{5}{2}+\frac{s}{2}\leq \frac{5}{2}\cdot 7-\frac{3}{2}<\frac{5}{2}\cdot 7-\frac{5}{t}$.

If $e(H_{1})=1$, assume that $a_{1}a_{2}\in E(G)$. Then $a_{1}, a_{2}$ can not have a neighbor outside and can not be shared either. This implies $s\leq 3$ and $e[H_{1},G\backslash B]\leq 3$. Then if $s=0$, we have $w(B)\leq 12+\frac{3}{2}= \frac{5}{2}\cdot 7-4<\frac{5}{2}\cdot 7-\frac{5}{2}$. If $s\geq 1$, then $w(B)\leq 12+\frac{3}{2}+\frac{s}{2}\leq \frac{5}{2}\cdot 7-\frac{5}{2}\leq \frac{5}{2}\cdot 7-\frac{5}{t}$.

Similarly, if $e(H_{1})=p$ for $2\leq p\leq 4$, we obtain that $s\leq 4-p$ and $e[H_{1},G\backslash B]\leq 4-p$. It follows that $w(B)\leq 11+p+\frac{4-p}{2}+\frac{4-p}{2}=\frac{5}{2}\cdot 7-\frac{5}{2}$. 

Specially, if $s=0$, $w(B)\leq 13+\frac{p}{2}$. When $p\leq 3$, $w(B)\leq \frac{5}{2}\cdot 7-3$. When $p=4$, $B$ is a connected component in $G$ with $e(B)=w(B)=15$, as shown in Figure ~\ref{fig_66}$(b)$. Here, we list these two results separately because they will play a role in the proof of the Lemma~\ref{lemma_eq1}.

\noindent\textbf{Case 3.} $B$ is a $6$-$5$ edge.

Let $uv$ be the $6$-$5$ edge and $u$ be the vertex of degree 6. There exist at least $4$ triangles sitting on the edge $uv$, otherwise an $S_{3,3}$ is found. Let $a_{1}, a_{2}, a_{3}$ and $a_{4}$ be the vertices adjacent to both $u$ and $v$, let $b_{1}$ be the vertex adjacent to only $u$, as shown in Figure~\ref{fig_65}$(a)$. Let $S_{1}=\{a_{1}, a_{2}, a_{3}, a_{4}\}$, $S_{2}=\{b_{1}\}$ and $H_{1}=G[S_{1}]$.

\begin{figure}[ht]
  \centering
  \includegraphics[width=0.8\textwidth]{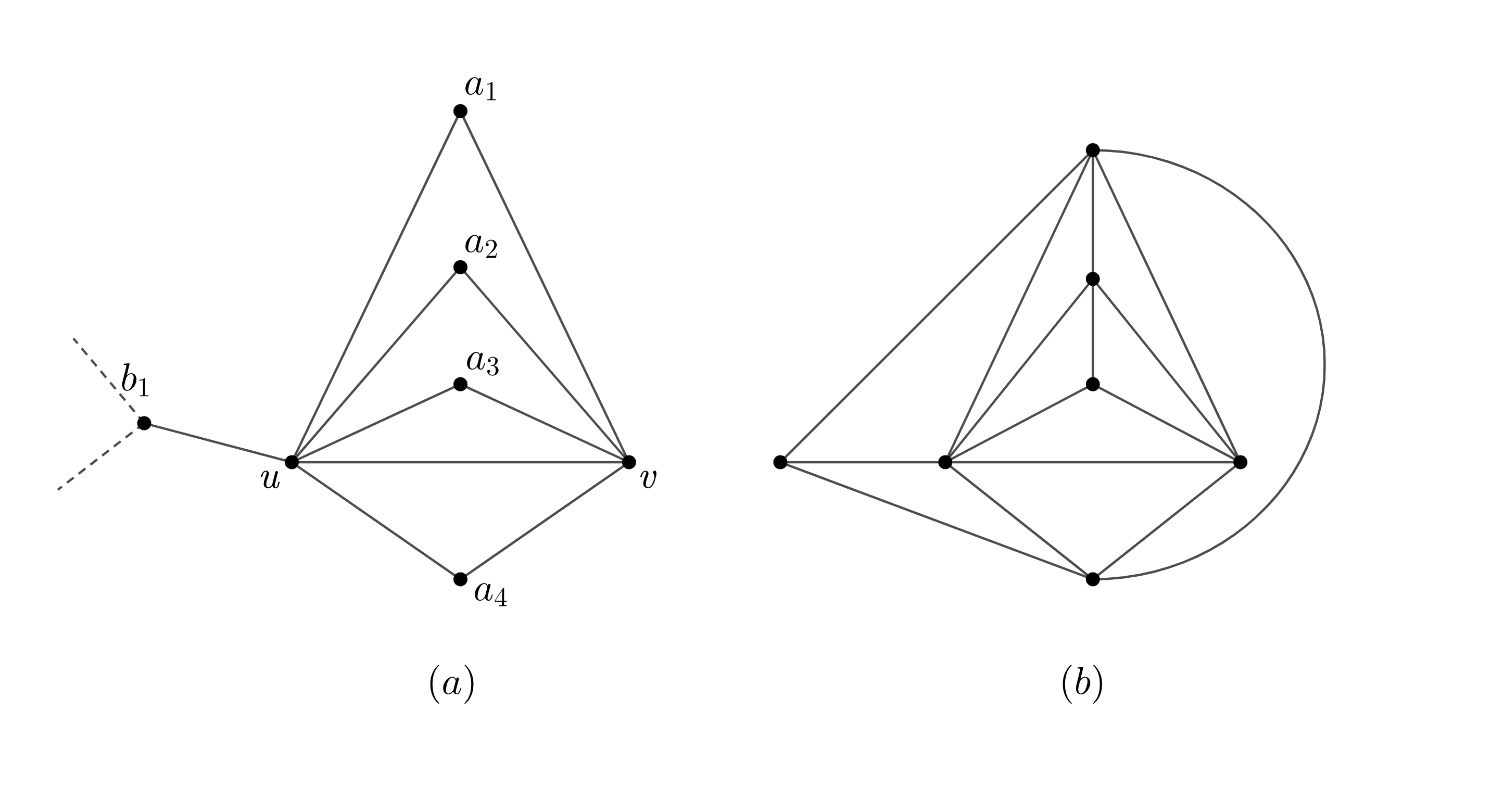}
  \caption{$(a)$ The star-block is a $6$-$5$ edge $uv$. $(b)$ The connected component on $7$ vertices.}
  \label{fig_65}
\end{figure}

Similar to Case 2, each shared vertex in $S_{1}$ has degree exactly 3. We claim that $d(b_{1})\leq 3$. In fact, if $d(b_{1})\geq 4$, there exists an edge $b_{1}b'_{1}\in E(G)$ and $b'_{1}\notin S_{1}$, then $G$ contains an $S_{3,3}$.

\noindent\textbf{Case 3.1.}  If $d_{\mathcal{B}}(b_{1})=3$, we have $\mathbf{1}_{B}=1$ and $b_{1}$ is not adjacent to any vertex in $S_{1}$. Then $w(B)= 10+e(H_{1})+\frac{1}{2}e[B,G\backslash B]+\frac{s}{2}+1$. Assume that $e(H_{1})=0$. It follows that $w(B)\leq 10+\frac{6}{2}+\frac{5}{2}+1=\frac{33}{2}=\frac{5}{2}\cdot 7-1$. If $e(H_{1})\geq 1$, it is easy to verify that $w(B)$ will not increase, which satisfies the upper bound too.

\noindent\textbf{Case 3.2.} If $d_{\mathcal{B}}(b_{1})=2$, it is obtained that $\mathbf{1}_{B}=0$.
Here we give the relationship between the number of shared vertices in $B$ and the number of star-blocks in $G$.
When $s=1, 2, 3$, $t\geq 2$. If $e(H_{1})=0$, we have $w(B)\leq 10+\frac{6}{2}+\frac{3}{2}=\frac{29}{2}=\frac{5}{2}\cdot 7-3$. If $e(H_{1})\geq 1$, it is easy to check that $w(B)$ will not increase.
When $s=4, 5$, it is easy to see that $t\geq 3$ and $e(H_{1})=0$. We have $w(B)\leq 10+\frac{6}{2}+\frac{5}{2}=\frac{31}{2}<\frac{5}{2}\cdot 7-\frac{5}{t}$.

\noindent\textbf{Case 3.3.} If $d_{\mathcal{B}}(b_{1})=1$, then $s\leq 4$.

When $s=0$, we discuss the subcases based on the degree of $b_{1}$ in $B$. If $|N(b_{1})\cap S_{1}|=0$, the discussion here is essentially the same as the subcase when $d_{\mathcal{B}}(b_{1})=3$, where $w(B)$ here is reduced by exactly $\mathbf{1}_{B}+\frac{1}{2}$. So $w(B)\leq \frac{5}{2}\cdot 7-\frac{5}{2}$. If $|N(b_{1})\cap S_{1}|=1$, the proof is same as the  subcase when $d_{\mathcal{B}}(b_{1})=2$. And $w(B)$ is reduced by at least $\frac{1}{2}$. Then $w(B)\leq \frac{31}{2}-\frac{1}{2}=\frac{5}{2}\cdot 7-\frac{5}{2}$. In fact,  a more refined calculation can yield that $w(B)\leq  \frac{5}{2}\cdot 7-3$.

It remains to prove the subcase when $|N(b_{1})\cap S_{1}|=2$. Assume that $b_{1}a_{1}, b_{1}a_{4}\in E(G)$. Then $b_{1}, a_{1}, a_{4}$ all have no neighbor outside of $B$, which implies $s\leq 2$. If  $e(H_{1})=p$ for $1\leq p\leq 3$, we obtain that $s\leq 3-p$ and $e[B,G\backslash B]=e[H_{1},G\backslash B]\leq 3-p$. Hence $w(B)\leq 12+p+\frac{3-p}{2}+\frac{3-p}{2}=\frac{5}{2}\cdot 7-\frac{5}{2}$. Specially, it can be checked that the equality holds if and only if $B$ is a connected component, as shown in Figure~\ref{fig_65}$(b)$.

When $s=1, 2$, we obtain $e(H_{1})\leq 2$ and $t\geq 2$. If $e(H_{1})=0$, it follows that $w(B)\leq 10+\frac{6}{2}+\frac{2}{2}=\frac{5}{2}\cdot 7-\frac{7}{2}$. And if $e(H_{1})\geq 1$, it is easy to see that $w(B)$ does not increase.

When $s=3, 4$, $e(H_{1})=0$ and $t\geq 3$. We have that 
$w(B)\leq 10+\frac{6}{2}+\frac{4}{2}=\frac{5}{2}\cdot 7-\frac{5}{2}$.

\noindent\textbf{Case 4.} $B$ is a $6$-$4$ edge.

Let $uv$ be the $6$-$4$ edge, as shown in Figure~\ref{fig_64}. There exist at least $3$ triangles sitting on the edge $uv$, otherwise an $S_{3,3}$ is found. Let $S_{1}=\{a_{1}, a_{2}, a_{3}\}$, $S_{2}=\{b_{1}, b_{2}\}$ and $H_{1}=G[S_{1}]$.

\begin{figure}[ht]
  \centering
  \includegraphics[width=0.55\textwidth]{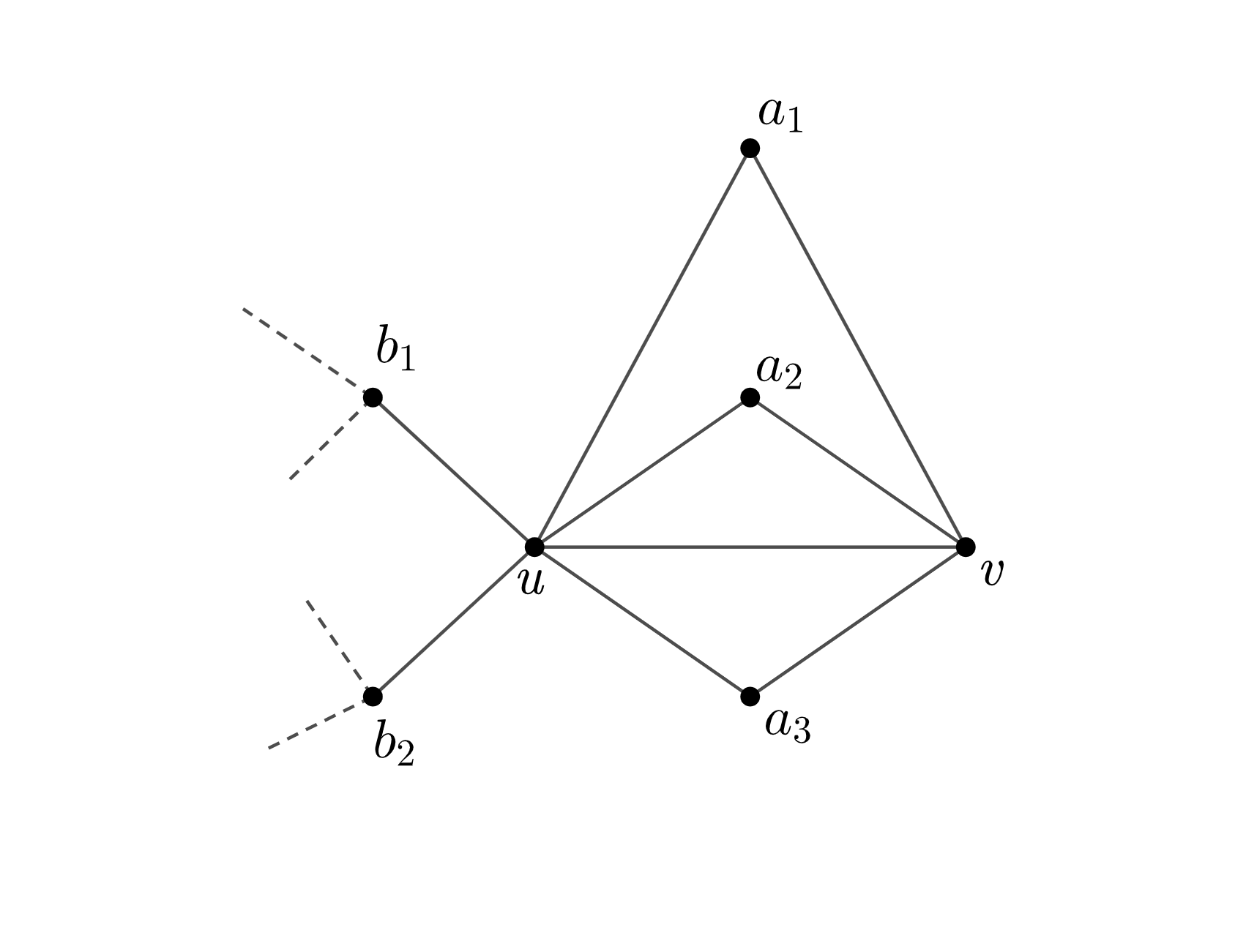}
  \caption{The star-block $B$ is a $6$-$4$ edge $uv$. }
  \label{fig_64}
\end{figure}

\noindent\textbf{Case 4.1.} $d_{\mathcal{B}}(b_{1})=3, d_{\mathcal{B}}(b_{2})\leq 3$. 

Note that $b_{1}$ is not adjacent to any vertex in $S_{1}\cup \{b_{2}\}$. Each vertex in $\{b_{1}, b_{2}\}$ has at most two neighbors outside.

If $e(H_{1})=0$ and $e[b_{2}, S_{1}]=0$, then $w(B)\leq 9+\frac{7}{2}+\frac{5}{2}+1=16< \frac{5}{2}\cdot 7-1$. It is easy to check that the conditions $e(H_{1})\geq 1$ and $e[b_{2}, S_{1}]\geq 1$ both will not lead to an increase in weight $w(B)$.

\noindent\textbf{Case 4.2.} 
$d_{\mathcal{B}}(b_{1})=d_{\mathcal{B}}(b_{2})=2$.

It can be confirmed that any shared vertex in $B$ has degree at most $3$. Thus $d(b_{1}), d(b_{2})\leq 3$. We may assume that there does not exist a $6$-$5$ edge in $B$. Then $d(a_{1}), d(a_{2}), d(a_{3})\leq 4$. The number of shared vertices in $S_{1}$ is $s-2$. It follows that $w(B)=e(H_{1})+\frac{1}{2}e[H_{1},G\backslash H_{1}]+\frac{s}{2}\leq \frac{1}{2}[6+4+4\cdot (5-s)+3\cdot s]+\frac{s}{2}=\frac{30}{2}=\frac{5}{2}\cdot 7-\frac{5}{2}$.

\noindent\textbf{Case 4.3.} $d_{\mathcal{B}}(b_{1})=2, d_{\mathcal{B}}(b_{2})=1$.

Similarly, we have $d(b_{1})\leq 3$ and the number of shared vertices in $S_{1}$ is $s-1$. So 
$w(B)=e(H_{1})+\frac{1}{2}e[H_{1},G\backslash H_{1}]+\frac{s}{2}\leq \frac{1}{2}[6+4\cdot 2+4\cdot (4-s)+3\cdot s]+\frac{s}{2}=\frac{30}{2}=\frac{5}{2}\cdot 7-\frac{5}{2}$.

\noindent\textbf{Case 4.4.} $d_{\mathcal{B}}(b_{1})=d_{\mathcal{B}}(b_{2})=1$.

Similarly, we have $w(B)\leq \frac{1}{2}[6+4\cdot 3+4\cdot (3-s)+3\cdot s]+\frac{s}{2}=\frac{5}{2}\cdot 7-\frac{5}{2}$.

Note that if $s=0$, the equality holds when $B$ is a connected component. But there must exist a vertex  of degree at least $5$ in $B$, which means $B$ is a block star of $6$-$6$ edge or $6$-$5$ edge.

\noindent\textbf{Case 5.} $B$ is a $5$-$5$ edge.

This case is crucial for the proof and is also the most complex part. Let $uv$ be the $5$-$5$ edge. There exist at least $3$ triangles sitting on the edge $uv$, otherwise an $S_{3,3}$ is found. We distinguish the cases based on the number of triangles sitting on $uv$.

\noindent\textbf{Case 5.1.} There are $4$ triangles sitting on $uv$.

Let $a_{1}, a_{2}, a_{3}$ and $a_{4}$ be the vertices adjacent to both $u$ and $v$, as shown in Figure \ref{fig_55}$(a)$. Let $S=\{u, v, a_{1}, a_{2}, a_{3}, a_{4}\}$, $S_{1}=\{a_{1}, a_{2}, a_{3}, a_{4}\}$ and $H_{1}=G[S_{1}]$.

\begin{figure}[ht]
  \centering
  \includegraphics[width=0.9\textwidth]{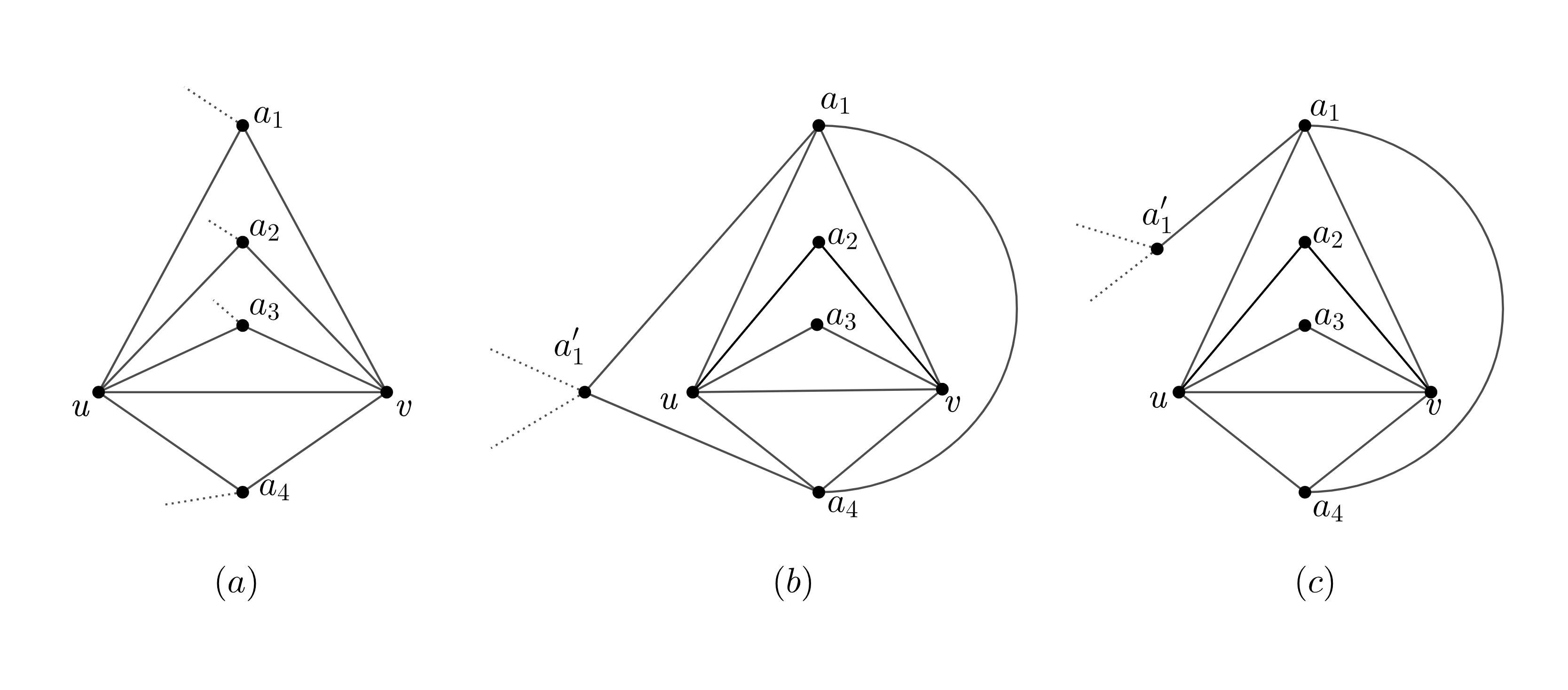}
  \caption{The star-block $B$ is a $5$-$5$ edge on which four triangles sitting.}
  \label{fig_55}
\end{figure}

Note that each vertex in $S_{1}$ can have at most one neighbor outside of $B$, thus $s'=0$ and $\mathbf{1}_{B}=0$.

Assume that $e(H_{1})=0$, then $w(B)\leq 9+\frac{4}{2}+\frac{s}{2}$. If $s=0$, we have $w(B)\leq 11< \frac{5}{2}\cdot 6 -\frac{5}{2}$. If $s=1, 2$, then $t\geq 2$. It follows $w(B)\leq 12< \frac{5}{2}\cdot 6 -\frac{5}{t}$. If $s=3, 4$, then $t\geq 3$, which implies $w(B)\leq 13< \frac{5}{2}\cdot 6 -\frac{5}{t}$.

\noindent\textbf{Case 5.1.1.}  $e(H_{1})=1$.

Without loss of generality, we may assume that $a_{1}a_{4}\in E(G)$. Then $a_{1}, a_{4}$ can not be shared vertices and $s\leq 2$. If there exists one vertex of $S_{1}$ has no neighbor outside, then $w(B)\leq 10+\frac{3}{2}+\frac{s}{2}\leq \frac{5}{2}\cdot 6-\frac{5}{2}.$ Thus each vertex of $S_{1}$ has exactly one neighbor outside. Let $a_{1}a'_{1}\in E(G)$ and $B'=B+a'_{1}$. Then  $\mathcal{B}'$ is the corresponding refinement. It is easy to see that $d(a'_{1})\leq 4$, otherwise there exists an $S_{3,3}$.

When $d(a'_{1})= 4$, we have $a'_{1}a_{4}\in E(G)$, as shown in Figure~\ref{fig_55}$(b)$, otherwise an $S_{3,3}$ is found. If $d_{\mathcal{B}'}(a'_{1})=2$, we have $t\geq 2$ when $s=0$ and $t\geq 3$ when $s=1, 2$. ($s$ is the number of shared vertices in $B$). It follows that $w(B')\leq 12+\frac{4}{2}+\frac{s}{2}+\frac{1}{4}< \frac{5}{2}\cdot 7-\frac{5}{t}$. If $d_{\mathcal{B}'}(a'_{1})=1$, then $w(B')\leq 12+\frac{4}{2}+\frac{s}{2}\leq \frac{5}{2}\cdot 7-\frac{5}{2}$.

When $d(a'_{1})\leq 3$, the star-block is shown  in Figure~\ref{fig_55}$(c)$. Then $w(B')\leq 11+\frac{5}{2}+\frac{s+1}{2}+\mathbf{1}_{B'}$. If $d_{\mathcal{B}'}(a'_{1})=3$, then $w(B')\leq 11+\frac{5}{2}+\frac{s+1}{2}+1< \frac{5}{2}\cdot 7-1$. If $d_{\mathcal{B}'}(a'_{1})\leq 2$ then $w(B')\leq 11+\frac{5}{2}+\frac{s+1}{2}\leq \frac{5}{2}\cdot 7-\frac{5}{2}$. Specially, if there is no shared vertex in $G$, we have $w(B')\leq \frac{27}{2}= \frac{5}{2}\cdot 7 -4$.

\noindent\textbf{Case 5.1.2.} $e(H_{1})=2$.

There are two distinct non-isomorphic subgraphs, as shown in Figure~\ref{fig_55_2}$(a, b)$.

\begin{figure}[ht]
  \centering
  \includegraphics[width=0.9\textwidth]{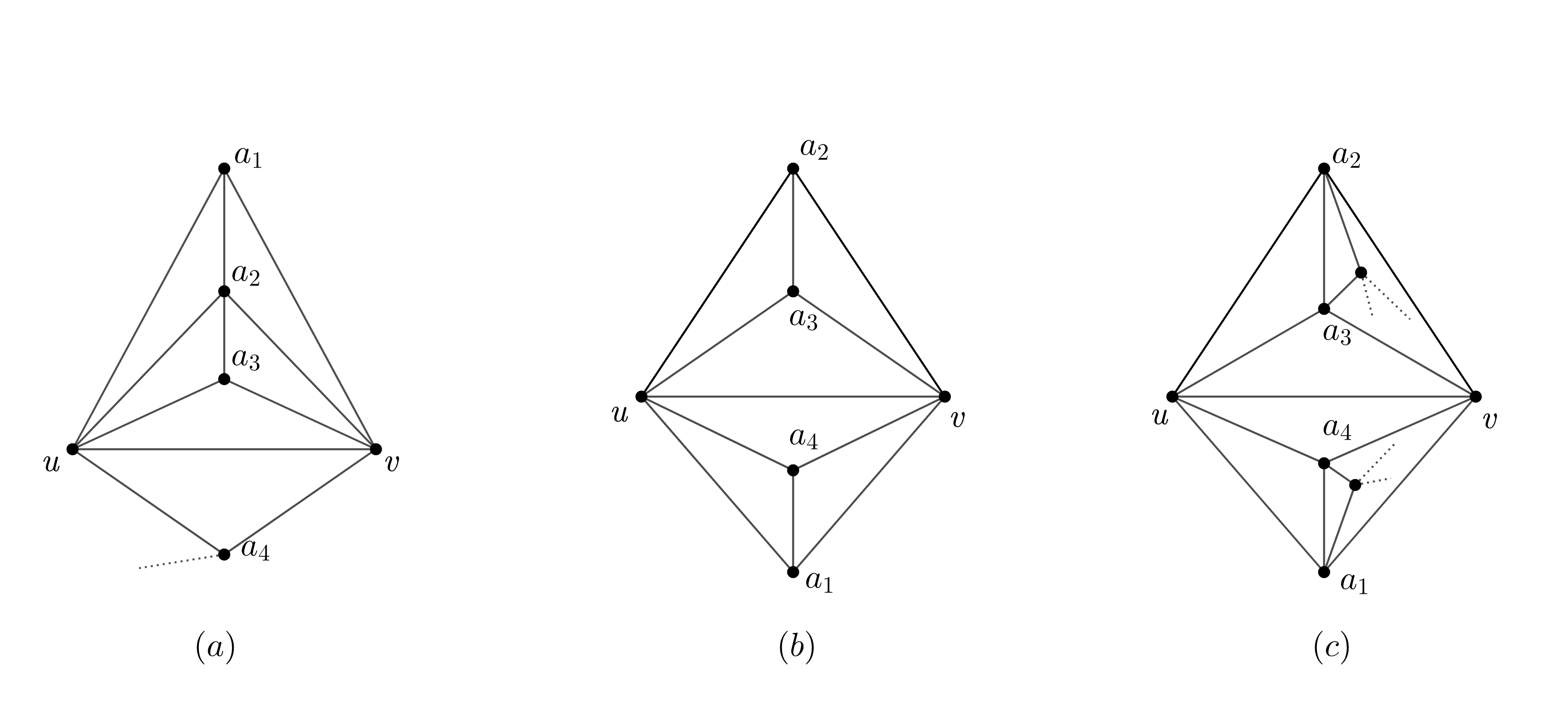}
  \caption{The star-block $B$ is a $5$-$5$ edge with $e(H_{1})=2$. }
  \label{fig_55_2}
\end{figure}

\noindent\textbf{(I).} 
$a_{1}a_{2}, a_{2}a_{3}\in E(G)$. 
It is easy to get $s\leq 1$.

If $d(a_{2})=5$, then there exists an edge $a_{2}a'_{2}\in E(G)$ such that $d(a'_{2})\leq 3$. Let  $B'=B+a'_{2}$ and  $\mathcal{B}'$ be the corresponding refinement. If $d_{\mathcal{B}'}(a'_{2})=3$, then $w(B')\leq 12+\frac{5}{2}+\frac{s+1}{2}+1\leq \frac{5}{2}\cdot 7-1$. If $d_{\mathcal{B}'}(a'_{2})=2$, then $t\geq 2$ when $s=0$ and $t\geq 3$ when $s=1$. It follows $w(B')\leq 12+\frac{5}{2}+\frac{s+1}{2}\leq \frac{5}{2}\cdot 7-\frac{5}{t}$. If $d_{\mathcal{B}'}(a'_{2})=1$, we have $w(B')\leq 12+\frac{5}{2}+\frac{s}{2}\leq \frac{5}{2}\cdot 7-\frac{5}{2}$. Specially, if $s=0$, we have $w(B')\leq 12+\frac{5}{2}= \frac{5}{2}\cdot 7 -3$.

Next we may assume that $d(a_{2})\leq 4$. If there exists one vertex in $\{a_{1}, a_{3}, a_{4}\}$ having no neighbor outside, then $w(B)\leq 11+\frac{2}{2}+\frac{s}{2}\leq \frac{5}{2}\cdot 6-\frac{5}{2}$. Specially, if $s=0$, we have $w(B)\leq 11+\frac{2}{2}= \frac{5}{2}\cdot 6 -3$. So we may assume that each vertex of $\{ a_{1}, a_{3}, a_{4}\}$ has a neighbor in $G\backslash B$. Let $a_{3}a'_{3}\in E(G)$ and $B'=B+a'_{3}$ with $d(a'_{3})\leq 3$. We have $w(B')$ satisfies the upper bound. The proof is similar to $a'_{2}$ mentioned above, so we do not elaborate further here.

\noindent\textbf{(II).} 
$a_{1}a_{4}, a_{2}a_{3}\in E(G)$. It is confirmed that $s=0$.

If each vertex in $S_{1}$ has no neighbor outside of $B$, then $w(B)=11=\frac{5}{2}\cdot 6-4$. Let $a_{2}a'_{2}\in E(G)$ and $B'=B+a'_{2}$. Note that $d(a'_{2})\leq 4$.

When $d(a'_{2})=4$, we have $|N(a'_{2})\cap S_{1}|=2$. Then $a'_{2}$ must be adjacent to $a_{3}$, otherwise an $S_{3,3}$ is found. If $a_{4}$ has no neighbor outside, then $w(B')\leq 13+\frac{3}{2}+\frac{1}{4}< \frac{5}{2}\cdot 7-\frac{5}{2}$.
Let $a_{4}a'_{4}\in E(G)$,  $B^{*}=B'+a'_{4}$ and $\mathcal{B}^{*}$ is the corresponding refinement of $\mathcal{B}'$. Similarly, when $d(a'_{4})=4$, we have $|N(a'_{2})\cap S_{1}|=2$, as shown in Figure~\ref{fig_55_2}$(c)$. 
If $d_{\mathcal{B}^{*}}(a'_{2})=d_{\mathcal{B}^{*}}(a'_{4})=2$, then $t\geq 3$. It follows $w(B^{*})\leq 15+\frac{4}{2}+\frac{2}{4}<  \frac{5}{2}\cdot 8-\frac{5}{t}$. 
Otherwise, $w(B^{*})\leq 15+\frac{4}{2}+\frac{1}{4}<  \frac{5}{2}\cdot 8-\frac{5}{2}$.
When $d(a'_{4})=3$, it can be proved similarly.
If $d_{\mathcal{B}^{*}}(a'_{4})=3$, we get $w(B^{*})\leq 14+\frac{5}{2}+\frac{1}{2}+\frac{1}{4}+1< \frac{5}{2}\cdot 8-1$.
If $d_{\mathcal{B}^{*}}(a'_{4})=2$, then $w(B^{*})\leq 14+\frac{5}{2}+\frac{1}{2}+\frac{1}{4}< \frac{5}{2}\cdot 8-\frac{5}{2}$. 
If $d_{\mathcal{B}^{*}}(a'_{4})=1$, we have $w(B^{*})\leq 14+\frac{5}{2}+\frac{1}{4}< \frac{5}{2}\cdot 8-3$. Specially, if there is no shared vertex in $G$, we have $w(B^{*})\leq 14+\frac{5}{2}= \frac{5}{2}\cdot 8-\frac{7}{2}$.

When $d(a'_{2})\leq 3$, we can also analyze it based on $d_{\mathcal{B}'}(a'_{2})$.
If $d_{\mathcal{B}'}(a'_{2})=3$, we have $w(B')\leq 12+\frac{5}{2}+\frac{1}{2}+1< \frac{5}{2}\cdot 7-1$.
If $d_{\mathcal{B}'}(a'_{2})=2$, then $w(B')\leq 12+\frac{5}{2}+\frac{1}{2}= \frac{5}{2}\cdot 7-\frac{5}{2}$.
If $d_{\mathcal{B}'}(a'_{2})=1$, it follows $w(B')\leq 12+\frac{5}{2}=\frac{5}{2}\cdot 7-3$.

\noindent\textbf{Case 5.1.3.} $e(H_{1})=3$.

Let $a_{1}a_{2}, a_{2}a_{3}, a_{1}a_{4}\in E(G)$, as shown in Figure~\ref{fig_55_3}.

\begin{figure}[ht]
  \centering  \includegraphics[width=0.55\textwidth]{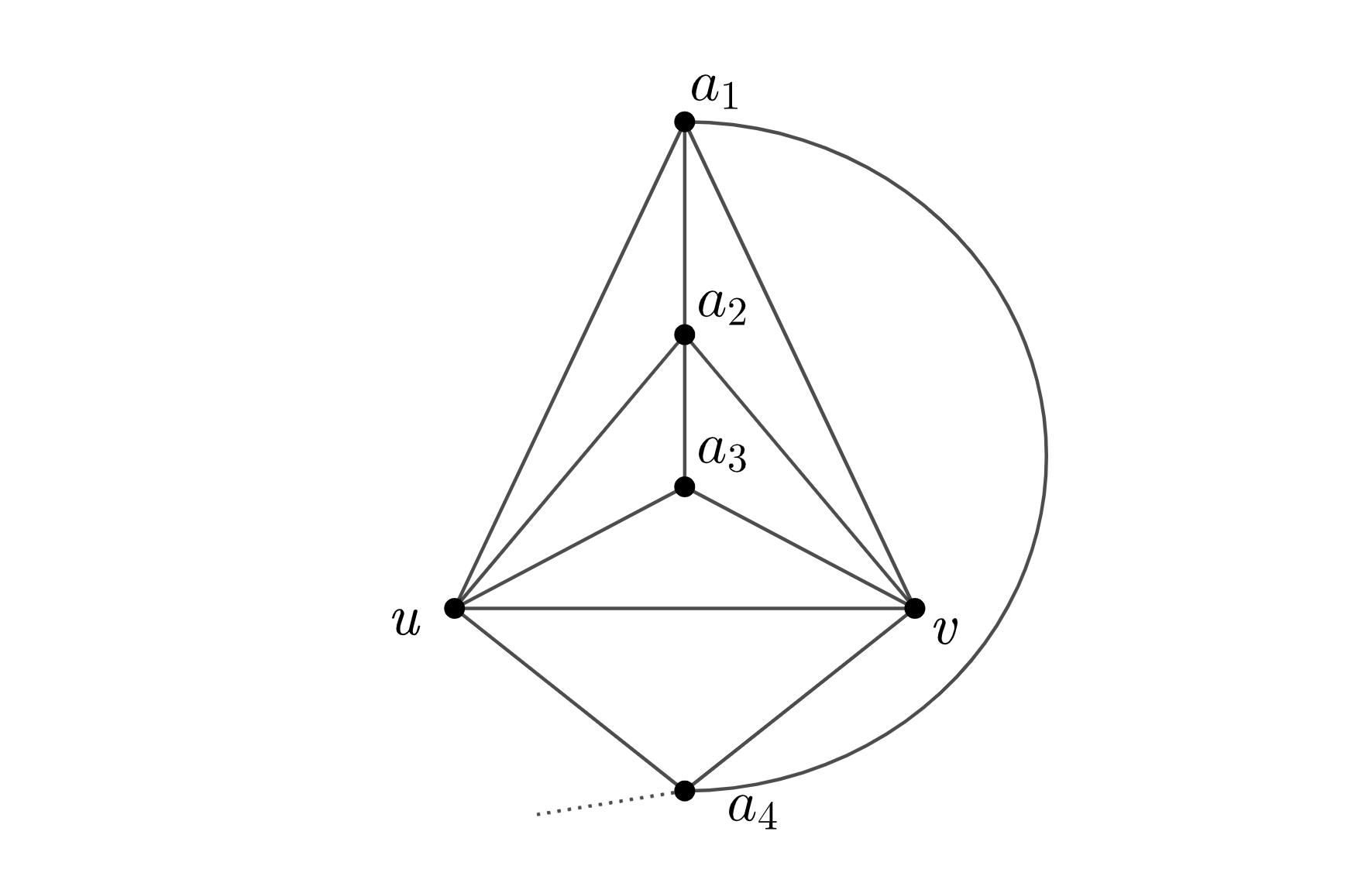}
  \caption{The star-block $B$ is a $5$-$5$ edge with $e(H_{1})=3$. }
  \label{fig_55_3}
\end{figure}

Assume that $d(a_{4})=4$.
Let $a_{4}a'_{4}\in E(G)$ and $B'=B+a'_{4}$. Note that $d(a'_{4})=3$, otherwise we find an $S_{3,3}$. If $d(a_{2})=4$ and $d(a_{3})=3$,  only $a_{1}$ can have a neighbor outside of $B'$.
When $d_{\mathcal{B}'}(a'_{4})=3$, then $w(B')\leq 13+\frac{3}{2}+\frac{1}{2}+1< \frac{5}{2}\cdot 7-1$.
When $d_{\mathcal{B}'}(a'_{4})=2$, then $w(B')\leq 13+\frac{3}{2}+\frac{1}{2}= \frac{5}{2}\cdot 7-\frac{5}{2}$.
When $d_{\mathcal{B}'}(a'_{4})=1$, it follows $w(B')\leq 13+\frac{3}{2}= \frac{5}{2}\cdot 7-3$.
If $d(a_{2})=5$ and $d(a_{3})=3$, there exists an edge $a_{2}a'_{2}\in E(G)$ with $d(a'_{2})=3$. Let $B^{*}=B'+a'_{2}$ and $\mathcal{B}^{*}$ is the corresponding refinement of $\mathcal{B}'$. 
When $d_{\mathcal{B}^{*}}(a'_{4})=3$ or $d_{\mathcal{B}^{*}}(a'_{2})=3$, we get $w(B^{*})\leq 14+\frac{5}{2}+\frac{2}{2}+1< \frac{5}{2}\cdot 8-1$.
When $d_{\mathcal{B}^{*}}(a'_{4}), d_{\mathcal{B}^{*}}(a'_{2})\leq 2$, then $w(B^{*})\leq 14+\frac{5}{2}+\frac{2}{2}=\frac{5}{2}\cdot 8-\frac{5}{2}$. 
when $d_{\mathcal{B}^{*}}(a'_{4})=d_{\mathcal{B}^{*}}(a'_{2})=1$, we have $w(B^{*})\leq 14+\frac{5}{2}= \frac{5}{2}\cdot 8-\frac{7}{2}$. 
If $d(a_{3})=4$, we have an edge $a_{3}a'_{3}$ with $d(a'_{3})=3$. Let $B^{*}=B'+a'_{3}$. The subsequent discussion is analogous, so it will not be reiterated here. We can prove that $B^{*}$ or some other star-block refined from $B^{*}$ satisfies the upper bound. 

Hence we have $d(a_{3})=d(a_{4})=3$ by the symmetry of these two vertices.

Next we show that $d(a_{1})\leq 4$ or $d(a_{2})\leq 4$. If $d(a_{1})=d(a_{2})=5$, there exist edges $a_{1}a'_{1}, a_{2}a'_{2}$ in $G$ and $d(a'_{1}),d(a'_{2})\leq 3$. If $a'_{1}\neq a'_{2}$, let $B'=B+a'_{1}+a'_{2}$. Then $w(B')\leq 14+\frac{4}{2}+\frac{2}{2}+\mathbf{1}_{B'}= \frac{5}{2}\cdot 8-3+\mathbf{1}_{B'}$, satisfying the upper bound. If $a'_{1}=a'_{2}$, then let $B'=B+a'_{1}$. We have $d_{\mathcal{B'}}(a'_{1})\leq 2$ and  $w(B')\leq 14+\frac{1}{2}+\frac{1}{2}=\frac{5}{2}\cdot 7-\frac{5}{2}$.  Specially, if there is no shared vertex in $G$, we have $w(B')\leq 14+\frac{1}{2}= \frac{5}{2}\cdot 7-3$.

Now we have $d(a_{1})\leq 4$ or $d(a_{2})\leq 4$, thus $w(B)\leq 12+\frac{1}{2}=\frac{5}{2}\cdot 6-\frac{5}{2}$.

\noindent\textbf{Case 5.2.} There are $3$ triangles sitting on $uv$.

Let $a_{1}$, $a_{2}$ and $a_{3}$ be the vertices adjacent to both $u$ and $v$. Let $b_{1}$ be the vertex only adjacent to $u$ and $b_{2}$ be the vertex only adjacent to $v$, see Figure~\ref{fig_55_b_1}$(a)$ as an example. Let $S=\{u, v, a_{1}, a_{2}, a_{3}, b_{1}, b_{2}\}$, $S_{1}=\{a_{1}, a_{2}, a_{3}\}$, $S_{2}=\{b_{1}, b_{2}\}$ and  $H_{i}=G[S_{i}]$ for $i\in \{1, 2\}$.

\begin{figure}[ht]
  \centering  \includegraphics[width=1\textwidth]{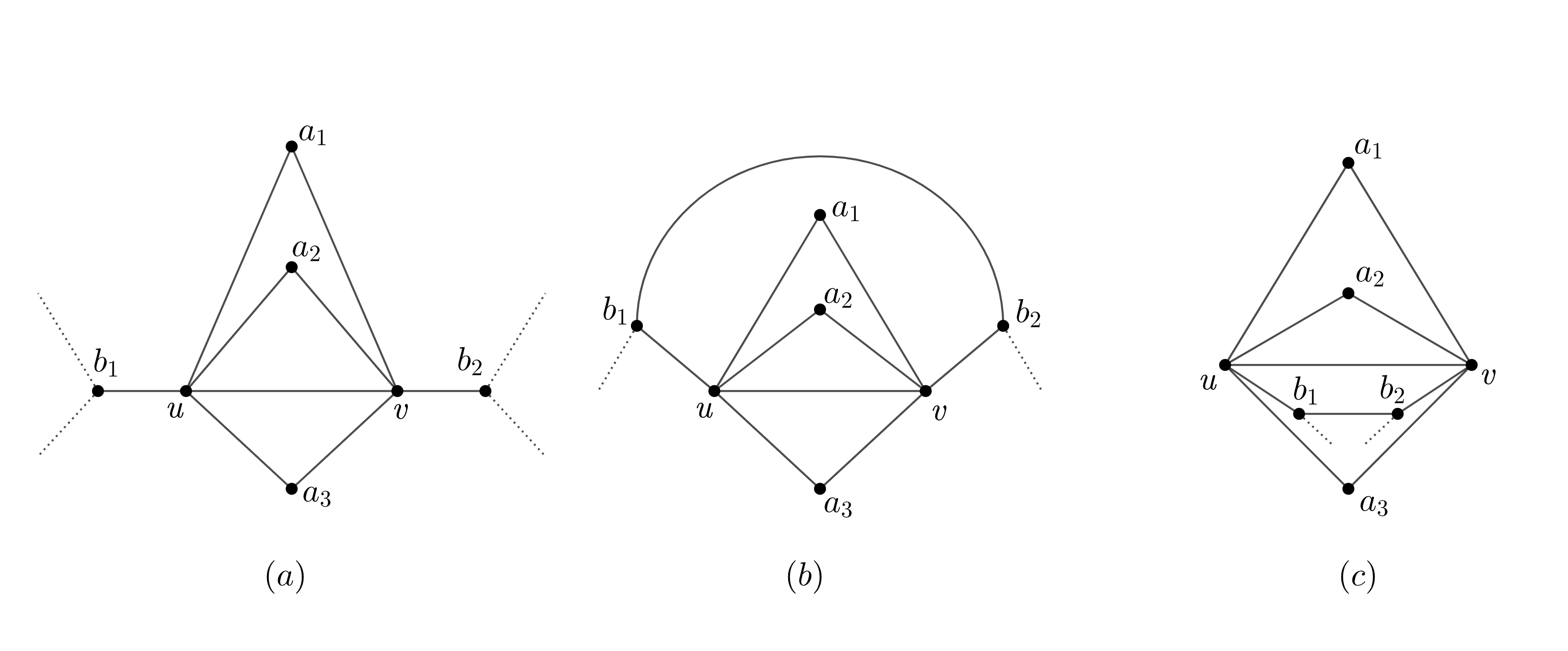}
  \caption{The star-block $B$ is a $5$-$5$ edge on which three triangles sitting. }
  \label{fig_55_b_1}
\end{figure}

Recall that each vertex in $S_{1}$ can have at most one neighbor outside of $B$. And it can be checked that $d(b_{1}), d(b_{2})\leq 4$. If $d(b_{1})=d(b_{2})=4$, then $b_{1}, b_{2}$ both have two neighbors in $S_{1}$. 

We may assume that there does not exist vertex of degree $6$ in $B$ by the cases discussed above. Let $s_{0}$ denote the number of shared vertices in $S_{1}$.

\noindent\textbf{Case 5.2.1.} $d(b_{1}), d(b_{2})\leq 3$.

\noindent\textbf{(I).} $d_{\mathcal{B}}(b_{1})=3, d_{\mathcal{B}}(b_{2})\leq 3$.

We have $d(b_{1})=3$ and $v(B)=7$. Note that $b_{1}$ has no neighbor in $S_{1}$. If $b_{2}a_{1}\in E(G)$, then $a_{1}$ can not have a neighbor outside of $B$. So it is determined that $d(a_{1})\leq 5$ and $d(a_{2}), d(a_{3})\leq 4$. Moreover any shared vertex in $S_{1}$ has degree $3$.

It is obtained $w(B)=\frac{1}{2}\sum\limits_{v\in B}d(v)+\frac{s}{2}+\mathbf{1}_{B}\leq \frac{1}{2}(5\cdot 3+4\cdot 2-s_{0}+3\cdot 2)+\frac{2+s_{0}}{2}+1=\frac{5}{2}\cdot 7-1$.

\noindent\textbf{(II).} $d_{\mathcal{B}}(b_{1})=d_{\mathcal{B}}(b_{2})=2$.

Assume that $b_{1}b_{2}\in E(G)$. Since $b_{1}, b_{2}$ are shared vertices, it is easy to know that $b_{1}, b_{2}$ is not adjacent to any vertex in $S_{1}$ and $d(b_{1})=d(b_{2})=3$.

There are two possible planar embeddings.

For the first planar embedding $(\alpha)$, as shown in Figure~\ref{fig_55_b_1}$(b)$, we have $a_{1}a_{3}\notin E(G)$, $d(a_{1}), d(a_{3})\leq 4$ and $d(a_{2})\leq 3$. When $s_{0}\leq 1$, $w(B)\leq \frac{1}{2}(5\cdot 2+4\cdot 2+3\cdot 3)+\frac{s_{0}+2}{2}\leq \frac{30}{2}=\frac{5}{2}\cdot 7-\frac{5}{2}$. When $s_{0}\geq 2$, $w(B)\leq \frac{1}{2}(5\cdot 2+4\cdot 2+3\cdot 3-s_{0}+1)+\frac{s_{0}+2}{2}= \frac{5}{2}\cdot 7-\frac{5}{2}$.

For the second planar embedding $(\beta)$, as shown in Figure~\ref{fig_55_b_1}$(c)$, there must exist a vertex in $S_{1}$, say $a_{2}$, such that the vertex $a_{2}$ is in a different region from vertices $b_{1}$ and $b_{2}$. If $d(a_{1})=5$, there exists an edge $a_{1}a'_{1}\in E(G)$ such that $d(a'_{1})\leq 3$, otherwise an $S_{3,3}$ is found. Let $B'=B+a'_{1}$ and $\mathcal{B}'$ is the corresponding refinement of $\mathcal{B}$. It is obtained that $w(B')\leq \frac{1}{2}(5\cdot 3+4\cdot 2-s_{0}+3\cdot 3)+\frac{3+s_{0}}{2}+\mathbf{1}_{B'}=\frac{5}{2}\cdot 8-\frac{5}{2}+\mathbf{1}_{B'}$, which satisfies the upper bound. Specially, if $d_{\mathcal{B}'}(a'_{1})=1$, we have $w(B')\leq 17=\frac{5}{2}\cdot 8-3$.

Therefore we can conclude that $b_{1}b_{2}\notin E(G)$. Next we will show that $d(a_{2}), d(a_{3})\leq 4$. In fact, if there exists a vertex of degree $5$, say $a_{3}$, then $a_{3}b_{1}, a_{3}b_{2}, a_{1}a_{3}\in E(G)$, which implies $d(a_{2})\leq 4$ and $a_{1}, a_{3}$ are not shared vertices. If $d(a_{1})=5$, then $a_{1}a_{2}\in E(G)$, as shown in Figure~\ref{fig_55_b_2}$(a)$. There is an $S_{3,3}$. So we get $d(a_{1})\leq 4$. 
If $d(a_{2})=4$, there is an edge $a_{2}a'_{2}$ with $d(a'_{2})\leq 3$. Let $B'=B+a'_{2}$ and $\mathcal{B}'$ is the corresponding refinement of $\mathcal{B}$. We have $w(B')\leq \frac{1}{2}(5\cdot 3+4\cdot 2+3\cdot 3)+\frac{2}{2}+\mathbf{1}_{B'}=\frac{5}{2}\cdot 8-3+\mathbf{1}_{B'}$, satisfying the bound.
Thus $d(a_{1})\leq 4$ and $d(a_{2})\leq 3$. It follows $w(B)\leq \frac{1}{2}(5\cdot 3+4\cdot 1-s_{0}+3\cdot 3)+\frac{2+s_{0}}{2}=  \frac{5}{2}\cdot 7-\frac{5}{2}$.

\begin{figure}[ht]
  \centering  \includegraphics[width=1\textwidth]{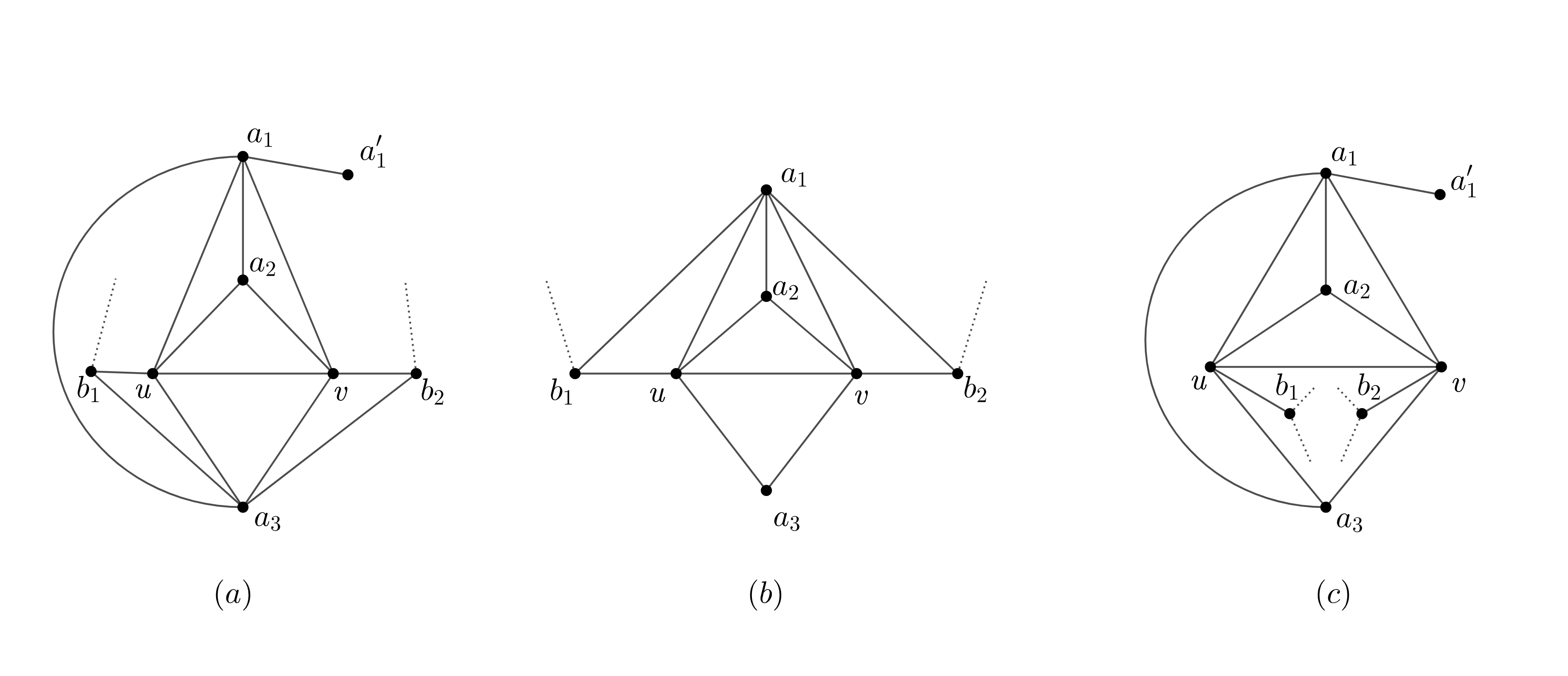}
  \caption{The star-block $B$ is a $5$-$5$ edge with $b_{1}b_{2}\notin E(G)$. }
  \label{fig_55_b_2}
\end{figure}

Now we may assume that $d(a_{2}), d(a_{3})\leq 4$ by the symmetry of these two vertices. If $b_{1}, b_{2}$ are in different regions, we obtain $t\geq 3$ and $w(B)\leq \frac{1}{2}(5\cdot 3+4\cdot 2-s_{0}+3\cdot 2)+\frac{s_{0}+2}{2}= \frac{31}{2}< \frac{5}{2}\cdot 7-\frac{5}{t}$.

We assume that $b_{1}, b_{2}$ are in the same region. Then there exists a vertex, say $a_{2}$, located in a different region. If $d(a_{2})=4$, then  let $a_{2}a'_{2}\in E(G)$ and $B'=B+a'_{2}$. It follows that $w(B')\leq \frac{1}{2}(5\cdot 3+4\cdot 2-s_{0}+3\cdot 3)+\frac{s_{0}+3}{2}+\mathbf{1}_{B'}=\frac{5}{2}\cdot 8-\frac{5}{2}+\mathbf{1}_{B'}$, which satisfies the upper bound. Specially, if $d_{\mathcal{B}'}(a'_{2})=1$, $w(B')\leq \frac{5}{2}\cdot 8-3$.

Hence we have $d(a_{2})\leq 3$. If $d(a_{1})=5$, there are only two possible planar embeddings, shown in Figure~\ref{fig_55_b_2}$(b,c)$. For the first planar embedding $(\alpha)$, it is known that $d(a_{3})\leq 3$ and $s_{0}\leq 1$. It follows $w(B)\leq \frac{1}{2}(5\cdot 3+3\cdot 4)+\frac{s_{0}+2}{2}\leq \frac{5}{2}\cdot 7-\frac{5}{2}$. For the second planar embedding $(\beta)$, we know $s_{0}=0$. Then $w(B)\leq \frac{1}{2}(5\cdot 3+4+3\cdot 3)+\frac{2}{2}\leq \frac{5}{2}\cdot 7-\frac{5}{2}$.

Now we have $d(a_{1}), d(a_{3})\leq 4$ and $d(a_{2})\leq 3$. Hence $w(B)\leq \frac{1}{2}[5\cdot 2+4\cdot 2+3\cdot 3-\mathbf{1}_{s_{0}}\cdot(s_{0}-1) ]+\frac{s_{0}+2}{2}\leq \frac{30}{2}=\frac{5}{2}\cdot 7-\frac{5}{2}$, where $\mathbf{1}_{s_{0}}$ is the characteristic function.

\noindent\textbf{(III).} $d_{\mathcal{B}}(b_{1})=2, d_{\mathcal{B}}(b_{2})=1$.

We demonstrate that the proof here is essentially the same as the aforementioned discussion, and there is a relationship between the weights of them.

Note that the calculations for $w(B)$ above were based on the assumption that all vertices in $B$ have degree at least $3$. As shown in Figure~\ref{fig_55_b_1}$(a)$, if $d(b_{2})\leq 2$, then the star-block structure here is a subgraph of a certain subcase, say $B'$, in (II). So it is a process that ensures a decrease in weight. We have $w(B)\leq w(B')-\frac{1}{2}$. By verifying the above results sequentially, we obtain $w(B)$ satisfies the upper bound.

Hence we can assume that $d(b_{2})=3$. If $b_{2}$ has a neighbor outside of $B$, it is checked that the subgraph structure is the same as previously discussed in (II). Note that $b_{2}$ is not a shared vertex anymore. Comparing to the previous value, the $w(B)$ here will be reduced by $1/2$. This means $w(B)$ satisfies the upper bound.

Thus $N(b_{2})\subset V(B)$. Note that $b_{1}$  has at least one neighbor outside by $d_{\mathcal{B}}(b_{1})=2$. There must exist a vertex in $S_{1}$, say $a_{1}$, which is adjacent to $b_{2}$ and not adjacent to $b_{1}$. Note that $a_{1}$ can not have a neighbor outside of $B$.

Now we make some modifications to this star-block. Let us cut the edge $a_{1}b_{2}$. And suppose that $a_{1}$ has a neighbor outside of $B$ and $b_{2}$ has a neighbor outside too. The modified star-block is denoted as $B'$. If $d_{\mathcal{B}'}(b_{2})=2$, $B'$ is a subcase in (II) and $w_{0}(B)=w_{0}(B')$. The difference between $B$ and $B'$ is the number of shared vertices they contain. It can be checked that $w(B)\leq w(B')-\frac{1}{2}$.

\noindent\textbf{(IV).} $d_{\mathcal{B}}(b_{1})=d_{\mathcal{B}}(b_{2})=1$.

As in the previous discussion, we can categorize the situation here as one of the aforementioned subcases in (III). 

If $b_{1}$ has a neighbor outside, it is easy to see that the star-block here corresponds to a certain subgraph in (III).

Assume that $a_{1}b_{1}$ is an edge. We make some modifications to $B$ by cutting the edge and keeping the other connections of $a_{1}$. Now suppose $b_{1}$ have a neighbor outside and $d_{\mathcal{B}'}(b_{1})=2$, and let $B'$ denote the new star-block.  It is known that $B'$ is a star-block in (III) and $w(B)=w(B')$.

\noindent\textbf{Case 5.2.2.}
$d(b_{1})=4, d(b_{2})\leq 3$.

Since $d(b_{1})=4$, $b_{1}$ has two neighbors, say $a_{1}, a_{3}$, in $S_{1}$, as shown in Figure~\ref{fig_55_c_1}$(a)$. Then $a_{1}, a_{3}, b_{1}$ are not shared vertices and $a_{2}, b_{1}$ are in different regions. This means that $d(a_{2})\leq 4$ and $a_{1}, a_{3}$ have no neighbor outside of $B$. Let $s_{0}$ be the number of shared vertices in $S_{1}$. Then $s_{0}\leq 1$.

\begin{figure}[ht]
  \centering  \includegraphics[width=0.95\textwidth]{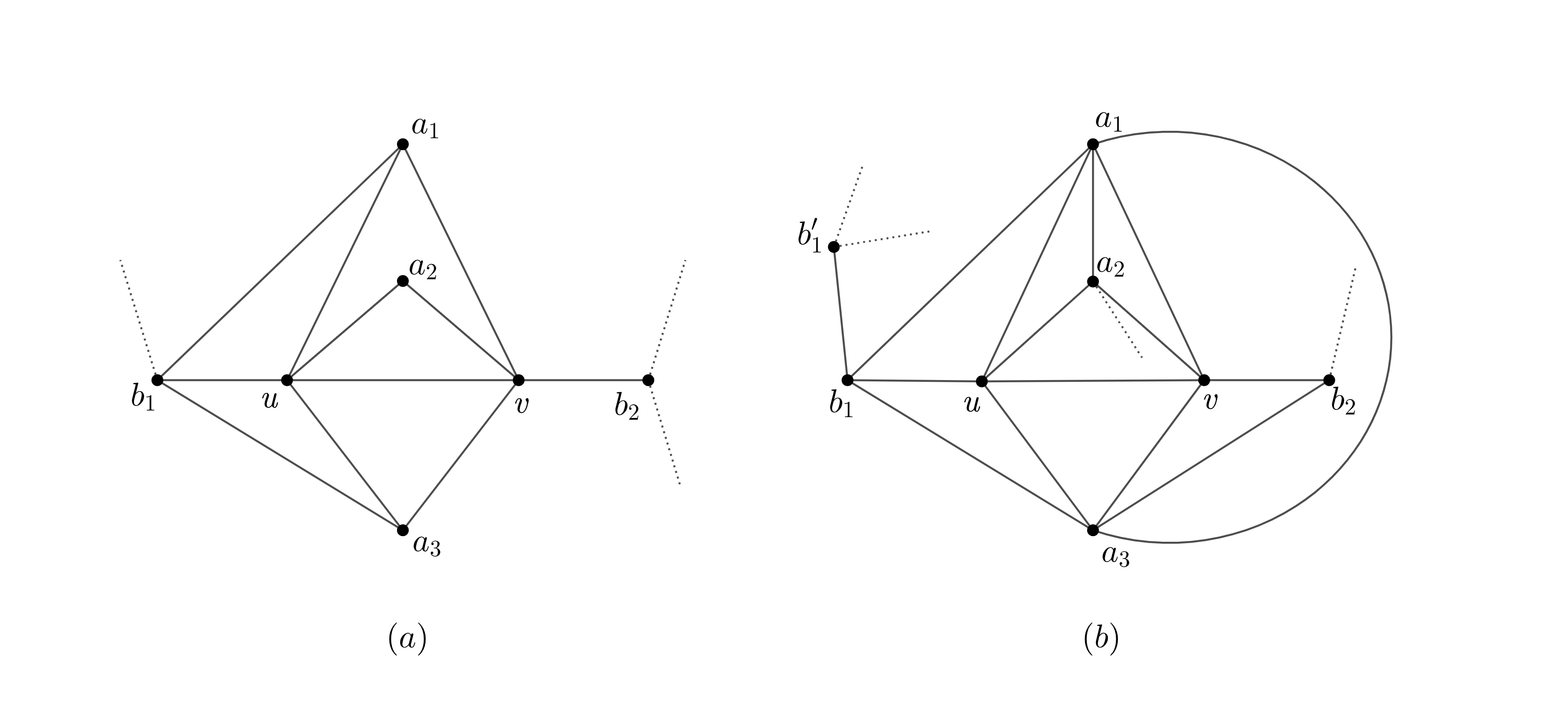}
  \caption{A $5$-$5$ edge with $d(b_{1})=4, d(b_{2})\leq 3$. }
  \label{fig_55_c_1}
\end{figure}

\noindent\textbf{(I).} $d_{\mathcal{B}}(b_{2})=3$.

Note that $b_{2}$ is not adjacent to any vertex in $S_{1}\cup \{b_{1}\}$. We have $d(a_{3})\leq 4$. Thus $w(B)\leq \frac{1}{2}(5\cdot 3+4\cdot 3-s_{0}+3\cdot 1)+\frac{s_{0}+1}{2}+\mathbf{1}_{B}= \frac{5}{2}\cdot 7-1$.

\noindent\textbf{(II).} $d_{\mathcal{B}}(b_{2})=2$.



If $a_{1}a_{3}\in E(G)$, $b_{1}, b_{2}$ are in different regions. Then there exists an edge $b_{1}b'_{1}$ such that $d(b'_{1})\leq 3$. Let $B'=B+b'_{1}$.

When $d_{\mathcal{B}'}(b'_{1})=3$, we have $w(B')\leq \frac{1}{2}(5\cdot 4+4\cdot 2-s_{0}+3\cdot 2)+\frac{s_{0}+2}{2}+\mathbf{1}_{B'}\leq \frac{38}{2}= \frac{5}{2}\cdot 8-1$.

When $d_{\mathcal{B}'}(b'_{1})=2$, we obtain $t\geq 3$. Then $w(B')\leq \frac{36}{2}\leq \frac{5}{2}\cdot 8-\frac{5}{t}$.

When $d_{\mathcal{B}'}(b'_{1})=1$, it follows $w(B')\leq \frac{1}{2}(5\cdot 4+4\cdot 2-s_{0}+3\cdot 2)+\frac{s_{0}+1}{2}\leq \frac{35}{2}=\frac{5}{2}\cdot 8-\frac{5}{2}$.

Now we may assume that $a_{1}a_{3}\notin E(G)$. It is easy to see that $d(a_{3})\leq 4$. Recall that $b_{2}$ has a neighbor outside of $B$. This implies that $d(a_{1})+d(a_{3})\leq 8$. So $w(B)\leq \frac{1}{2}(5\cdot 2+8+4\cdot 2-s_{0}+3\cdot 1)+\frac{s_{0}+1}{2}=\frac{5}{2}\cdot 7-\frac{5}{2}$.

\noindent\textbf{(III).} $d_{\mathcal{B}}(b_{2})=1$.

We have $w(B)\leq \frac{1}{2}(5\cdot 4+4\cdot 2-s_{0}+3)+\frac{s_{0}}{2}=\frac{5}{2}\cdot 7-2$. The equality holds when $d(b_{2})=3$, $d(a_{1})=d(a_{3})=5$ and $d(a_{2})=4$, otherwise we are done. It can be checked that $a_{1}$ is adjacent to $a_{3}$ as shown in~\ref{fig_55_c_1}$(b)$, which implies $b_{1}$ and $b_{2}$ are in different regions. So there exists an edge $b_{1}b'_{1}$ such that $d(b'_{1})\leq 3$. Let $B'=B+b'_{1}$. Then $w(B')\leq \frac{1}{2}(5\cdot 4+4\cdot 2-s_{0}+3\cdot 2)+\frac{s_{0}+1}{2}+\mathbf{1}_{B'}=\frac{5}{2}\cdot 8-\frac{5}{2}+\mathbf{1}_{B'}$, which satisfies the upper bound.

\noindent\textbf{Case 5.2.3.}
$d(b_{1})=d(b_{2})=4$.

It is easy to see that $d_{\mathcal{B}}(b_{1})=d_{\mathcal{B}}(b_{2})=1$ and $s_{0}\leq 1$. Note that $b_{1}, b_{2}$ both have two neighbors in $S_{1}$. Without loss of generality, we assume that $a_{1}, a_{3}$ are neighbors of $b_{1}$. Thus $d(a_{2})\leq 4$ and $a_{1}, a_{3}$ have no neighbor outside of $B$.

\noindent\textbf{(I).} $N(b_{1})=N(b_{2})$.

The graph of this situation is shown in Figure~\ref{fig_55_d_1}$(a)$. If $b_{1}b_{2}\in E(G)$, then $a_{1}a_{3}\notin E(G)$, which implies $d(a_{3})\leq 4$. If $d(a_{2})\leq 3$, we have $w(B)\leq \frac{1}{2}(5\cdot 3+4\cdot 3-s_{0}+3\cdot 1)+\frac{s_{0}}{2}= \frac{5}{2}\cdot 7-\frac{5}{2}$. If $d(a_{2})=4$, there exist edges $a_{1}a_{2}, a_{2}a'_{2}\in E(G)$ with $d(a'_{2})\leq 3$. Note that $s_{0}=0$. Let $B'=B+a'_{2}$ and $\mathcal{B'}$ be the corresponding refinement of $\mathcal{B}$. Hence $w(B')\leq \frac{1}{2}(5\cdot 3+4\cdot 4+3\cdot 1)+\frac{1}{2}+\mathbf{1}_{B'}=\frac{5}{2}\cdot 8-\frac{5}{2}+\mathbf{1}_{B'}$, which satisfies the upper bound.

Now we assume that $b_{1}b_{2}\notin E(G)$. There exist edges $b_{1}b'_{1}, b_{2}b'_{2}$ with $d(b'_{1}), d(b'_{2})\leq 3$. If $b'_{1}=b'_{2}$, then $a_{1}a_{3}\notin E(G)$, which implies $d(a_{3})\leq 4$. Let $B'=B+b'_{1}$. Note that $d_{\mathcal{B}'}(b'_{1})\leq 2$. Thus $w(B')\leq \frac{1}{2}(5\cdot 3+4\cdot 4-s_{0}+3\cdot 1)+\frac{s_{0}+1}{2}= \frac{5}{2}\cdot 8-\frac{5}{2}$. If $b'_{1}\neq b'_{2}$, let $B'=B+b'_{1}+b'_{2}$. Now we show that $d(a_{1})+d(a_{2})+d(a_{3})\leq 13$. In fact, if $a_{1}a_{3}\in E(G)$, then $d(a_{1}), d(a_{3})\geq 5$. Since we assume that there does not exist a star-block like $6$-$5$ edge, we get $d(a_{1})=d(a_{3})=5$. Then $a_{1}a_{2}\notin E(G)$, which implies $d(a_{2})\leq 3$. If $a_{1}a_{3}\notin E(G)$, it is easy to know $d(a_{1})\leq 5$, $d(a_{2})\leq 4$, $d(a_{1})\leq 4$. Thus it is obtained $d(a_{1})+d(a_{2})+d(a_{3})\leq 13$. Therefore we have $w(B')\leq \frac{1}{2}(5\cdot 2+13+4\cdot 2+3\cdot 2)+\frac{s_{0}+2}{2}+\mathbf{1}_{B'}\leq \frac{5}{2}\cdot 9-\frac{5}{2}+\mathbf{1}_{B'}$, which satisfies the upper bound.

\begin{figure}[ht]
  \centering  \includegraphics[width=0.95\textwidth]{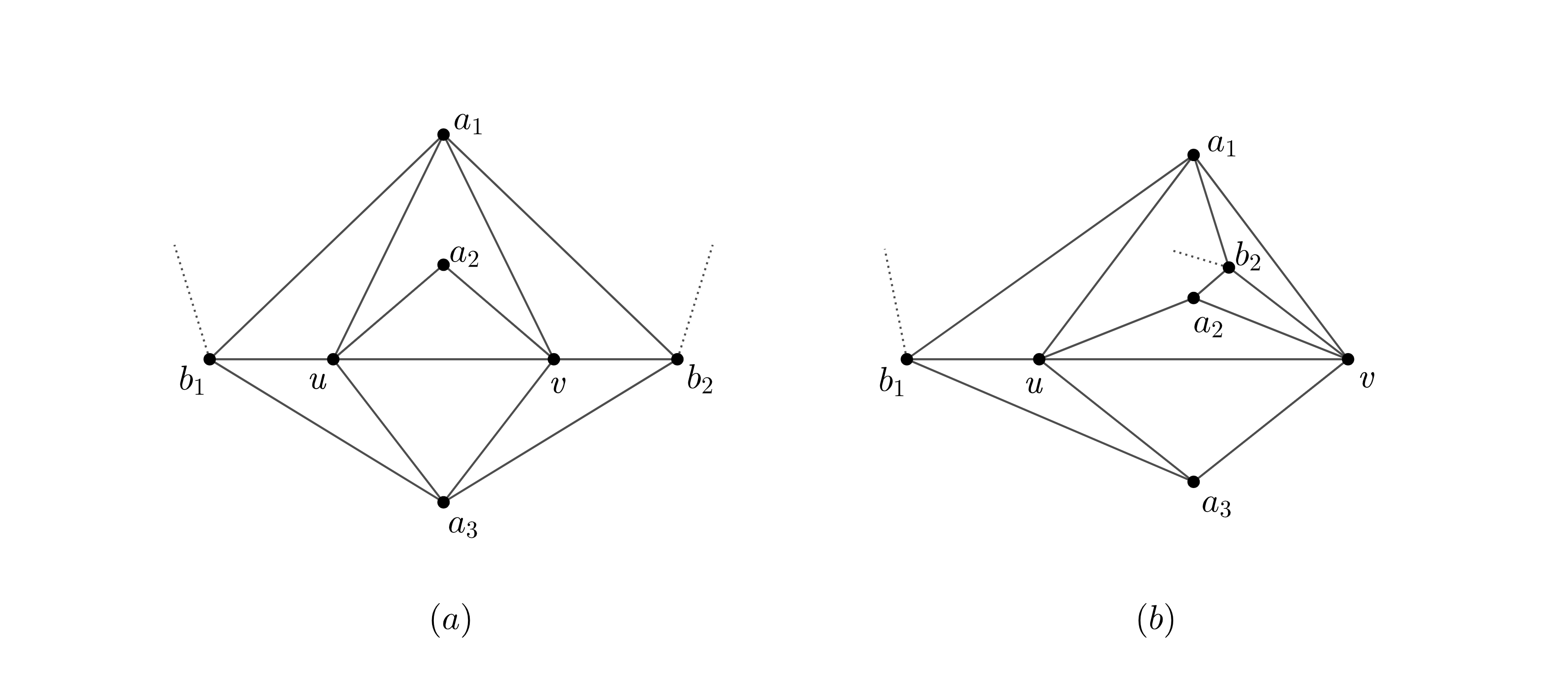}
  \caption{A $5$-$5$ edge with $d(b_{1})=d(b_{2})=4$. }
  \label{fig_55_d_1}
\end{figure}

\noindent\textbf{(II).} $N(b_{1})\neq N(b_{2})$.

Assume that $\{a_{1}, a_{2}\} \subseteq N(b_{2})$, as shown in Figure~\ref{fig_55_d_1}$(b)$. Obviously, $b_{1}, b_{2}$ are in different regions and any vertex in $S_{1}$ has no neighbor outside of $B$. That means $s_{0}=0$. By the assumption that there is no star-block like $6$-$5$ edge, so $a_{1}a_{2}, a_{1}a_{3}$ can not both be edges in $G$. It is obtained $d(a_{1})+d(a_{2})+d(a_{3})\leq 12$. Let $b_{1}b'_{1},b_{2}b'_{2} \in E(G)$ and $B'=B+b'_{1}+b'_{2}$, we have $w(B')\leq \frac{1}{2}(5\cdot 2+12+4\cdot 2+3\cdot 2)+\frac{2}{2}+\mathbf{1}_{B'}\leq \frac{5}{2}\cdot 9-\frac{5}{2}+\mathbf{1}_{B'}$, which satisfies the upper bound.

\noindent\textbf{Case 6.} $B$ is a $5$-$4$-$5$ path.

We may assume that there are no star-blocks like $6$-$5$ edge, $5$-$5$ edge. This means that all $5$-degree vertices form an independent set.  Let $u, v, w$ be the vertices in the $5$-$4$-$5$ path and $d(v)=4$. Consider the $5$-$4$ edge $uv$. Since $G$ is $S_{3,3}$-free, the number of triangles sitting on $uv$ is at least $2$. There are three possible subgraphs, seen in Figure~\ref{fig_545}. 

\begin{figure}[ht]
  \centering  \includegraphics[width=0.95\textwidth]{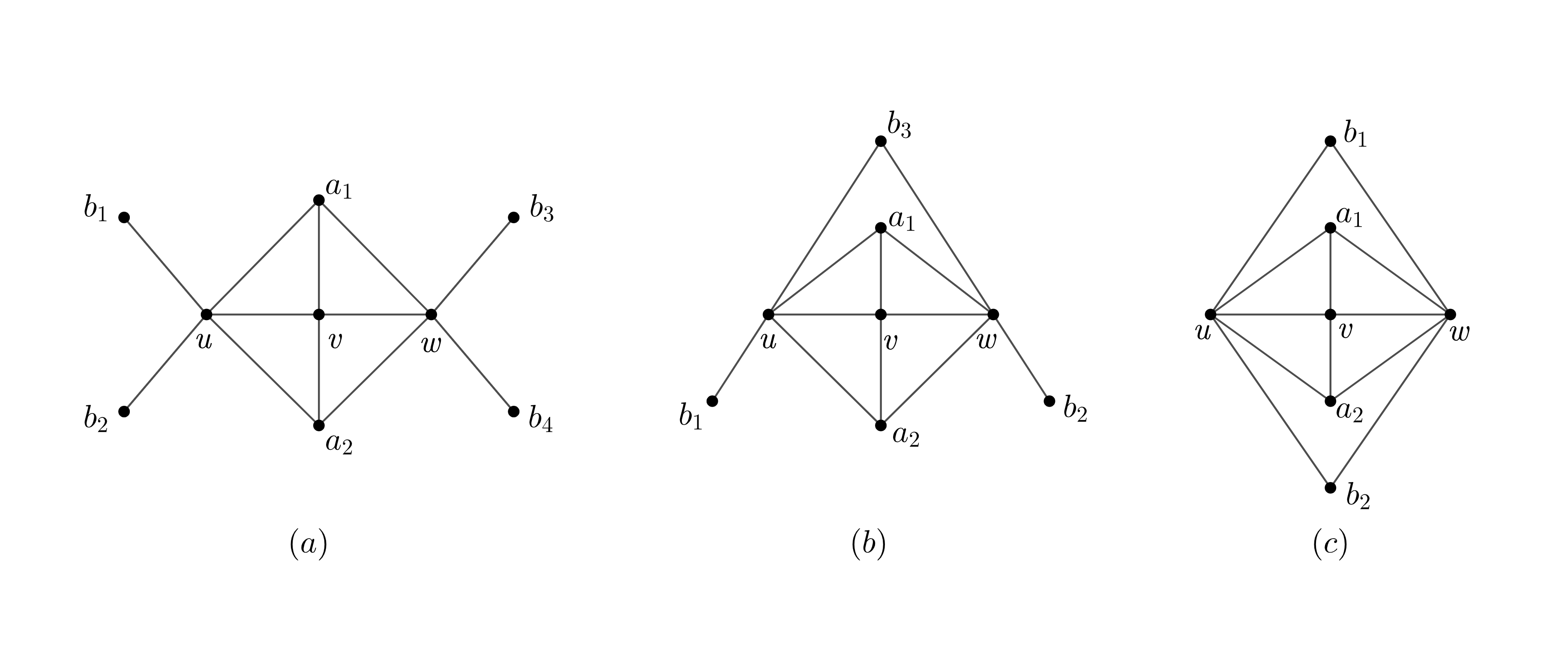}
  \caption{All $5$-$4$-$5$ paths. }
  \label{fig_545}
\end{figure}

Now we consider the subgraphs in turn.

For the subgraph $(a)$, let $S=\{u, v, w, a_{1}, a_{2}, b_{1}, b_{2}, b_{3}, b_{4}\}$, $S_{1}=\{a_{1}, a_{2}\}$ and $S_{2}=\{b_{1}, b_{2}, b_{3}, b_{4}\}$. It can be determined that each vertex in $S_{2}$ can have at most two neighbors in $S_{2}\cup G\backslash B$ and any vertex in $S_{1}$ has no neighbor in $S_{2}\cup G\backslash B$. Thus $d(b_{1}), d(b_{2}),d(b_{3}),d(b_{4})\leq 3$. It follows $w(B)\leq \frac{1}{2}(5\cdot 2+4\cdot 3+3\cdot 4)+\frac{4}{2}+\mathbf{1}_{B}=\frac{5}{2}\cdot 9-\frac{7}{2}+\mathbf{1}_{B}$. Specially, if there does not exist any shared vertex, we have $w(B)\leq \frac{5}{2}\cdot 9-\frac{11}{2}$.

Similarly, for the subgraph $(b)$,  we have $w(B)\leq \frac{1}{2}(5\cdot 2+4\cdot 4+3\cdot 2)+\frac{3}{2}+\mathbf{1}_{B}=\frac{5}{2}\cdot 8-\frac{5}{2}+\mathbf{1}_{B}$.

For the subgraph $(c)$, $b_{1}, b_{2}$ both have two neighbors in $B$. Hence, $d_{\mathcal{B}}(b_{1}), d_{\mathcal{B}}(b_{2})\leq 2$, which implies $\mathbf{1}_{B}=0$. If $d_{\mathcal{B}}(b_{1})=2$ or $d_{\mathcal{B}}(b_{2})=2$, then $w(B)\leq \frac{1}{2}(5\cdot 2+4\cdot 3+3\cdot 2)+\frac{2}{2}=\frac{5}{2}\cdot 7-\frac{5}{2}$. If $d_{\mathcal{B}}(b_{1})=d_{\mathcal{B}}(b_{2})=1$, we have $w(B)\leq \frac{1}{2}(5\cdot 2+4\cdot 5)=\frac{5}{2}\cdot 7-\frac{5}{2}$.

In summary, $w(B)$ satisfies the upper bound.

\noindent\textbf{Case 7.} $B$ is a $5$-$4^{-}$ star.

Let $u$ be the vertex of degree 5, we claim that any vertex $v\in N(u)$ can not be a shared vertex. In fact, there are at least $2$ triangles sitting on the edge $5$-$4$, otherwise an $S_{3,3}$ is found in $G$. This means that $v$ have at least three neighbors in $N[u]\backslash v$. By the definition of star-block, $v$ can not be a shared vertex.

Therefore we have $w(B)=\frac{1}{2}(5+5\cdot 4)=\frac{5}{2}\cdot 6 -\frac{5}{2}$.

\section{Proof of Lemma~\ref{lemma_eq1}}

Since $G$ contains only one star-block $B$, then all vertices outside of the star-block have degree at most $4$. There does not exist any shared vertex, which implies $w(B)=w_{0}(B)$.

If $G$ is disconnected, we assume that $G_{1}$ is the component containing the star-block and $G_{2}=G\backslash G_{1}$. Let $k_{1}=v(G_{1})$, $k'_{1}=v(B)$ and $k_{2}=v(G_{2})$. By Lemma~\ref{lemma_w}, $w(B)\leq \frac{5}{2}v(B)-\frac{5}{2}$. It follows that $e(G_{1})=w(G_{1})=w(B)+w(G_{1}\backslash B)\leq \frac{5}{2}k'_{1}-\frac{5}{2}+\frac{1}{2}[4(k_{1}-k'_{1})]$. When $1\leq k_{2}\leq 2$, then $e(G_{2})\leq \frac{5}{2}k_{2}-\frac{5}{2}$. When $3\leq k_{2}\leq 7$, it is easy to check that $e(G_{2})\leq 3k_{2}-6\leq \frac{5}{2}k_{2}-\frac{5}{2}$. When $k_{2}\geq 8$, we also have $e(G_{2})=\frac{1}{2}\sum\limits_{v\in V(G_{2})}d(v)\leq \frac{4k_{2}}{2}\leq \frac{5}{2}k_{2}-\frac{5}{2}$.

Hence, we have 
\begin{align*}
    e(G)&=e(G_{1})+e(G_{2})\\
    &\leq \frac{5}{2}k'_{1}-\frac{5}{2}+2(k_{1}-k'_{1})+ \frac{5}{2}k_{2}-\frac{5}{2}\\
    &=\frac{5}{2}n-5-\frac{1}{2}(k_{1}-k'_{1})\\
    &\leq \frac{5}{2}n-5.
\end{align*}      

Now we can assume that $G$ is connected and we will discuss each type of star-blocks in turn. Let $k=v(B)$.

  
\noindent\textbf{Case 1.} $G$ contains a $5^{+}$-$3^{-}$ star.

Let $B$ be the $5^{+}$-$3^{-}$ star and  $k\geq 6$. It follows $w(B)\leq \frac{1}{2}[(k-1)+3(k-1)]=2(k-1)\leq \frac{5}{2}k-5$. 
Thus  
\begin{align*}
    e(G)&=w(B)+w(G\backslash B)\\
    &\leq \frac{5}{2}k-5+\sum\limits_{v\in G\backslash B}d(v)\\
    &\leq \frac{5}{2}k-5+\frac{1}{2}\cdot 4(n-k)\\
    &\leq \frac{5}{2}n-5.    
\end{align*}

\noindent\textbf{Case 2.} $G$ contains a $6$-$6$ edge, or $6$-$5$ edge, or $6$-$4$ edge.

Referring to the final discussion in the proof for Cases $2,3,4$ of Lemma~\ref{lemma_w}, we can deduce that $B$ is the elementary star-block on $7$ vertices. Furthermore, if $B$ is a connected component, then $n=k=7$ and $e(G)=w(B)\leq 15$. If there are other vertices not contained in this star-block, we have $w(B)\leq \frac{5}{2}\cdot 7-3$. Then
\begin{align*}
    e(G)&=w(B)+w(G\backslash B)\\
    &\leq \frac{5}{2}\cdot 7-3+\frac{1}{2}[4(n-7)]\\
    &= 2n+\frac{1}{2}.
\end{align*}    

Since $e(G)$ is an integer, we have $e(G)\leq 2n$. This means $e(G)\leq 16$ when $n=8$, $e(G)\leq 18$ when $n=9$ and $e(G)\leq 20$ when $n=10$. Furthermore, when $n\geq 11$, it follows $e(G)\leq 2n+\frac{1}{2}\leq \frac{5}{2}n-5$.

\noindent\textbf{Case 3.} $G$ contains a $5$-$5$ edge.

We will show that there must exist a star-block in $G$ whose weight is not large.

\begin{claim}
    For any type of $5$-$5$ edge except one certain subcase, there exists a star-block $B$ containing it such that $w(B)\leq \frac{5}{2}k-\frac{k-1}{2}=2k+\frac{1}{2}$. 
\end{claim}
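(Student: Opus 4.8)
The plan is to read off what has already been proved in Lemma~\ref{lemma_w}. Since here $\mathcal{B}=\{B\}$, no vertex of $B$ is shared and none lies in a second block, so $s=s'=0$ and $\mathbf 1_B=0$; hence for \emph{every} star-block $B$ containing $uv$ we have $w(B)=w_0(B)=\tfrac12\sum_{x\in V(B)}d(x)$, and the target $w(B)\le\tfrac52 k-\tfrac{k-1}{2}$ is equivalent to $\sum_{x\in V(B)}(d(x)-4)\le 1$. As $d(u)=d(v)=5$ already contribute $+2$ to this sum, the whole point is to choose $B$ so that its remaining vertices carry a total ``degree deficit'' of at least $1$. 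Two facts make this tractable: the $5$-$5$ edge $uv$ lies on $c\in\{3,4\}$ triangles (the lower bound being the one used at the start of Case~5 of Lemma~\ref{lemma_w}, the upper one because $d(u)=d(v)=5$); and any potential vertex appended to $B$ has degree at most $4$, so appending it never increases $\sum_x(d(x)-4)$ and appending a vertex of degree $\le 3$ strictly decreases it.

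First I would rerun, in order, the subcase analysis of Case~5 of Lemma~\ref{lemma_w}, specialised to $s=s'=\mathbf 1_B=0$. For $c=4$ the elementary block has $k=6$ and the split is on $e(H_1)=e(G[\{a_1,a_2,a_3,a_4\}])$: if $e(H_1)=0$ each $a_i$ has at most one neighbour outside, so $d(a_i)\le3$ and $w(B)\le 11<\tfrac52\cdot6-\tfrac52$; if $e(H_1)\ge1$ the edges inside $H_1$ suppress outside neighbours exactly as quantified there, and whenever some $a_i$ still has degree $5$ I pass to the refinement ($B'$ or $B^{*}$) that absorbs one of its outside neighbours, landing on the ``specially, if there is no shared vertex'' bounds already recorded in Lemma~\ref{lemma_w} --- e.g. $w(B')\le\tfrac52\cdot7-3$ on $7$ vertices, $w(B^{*})\le\tfrac52\cdot8-\tfrac72$ on $8$ vertices --- which are precisely $2v(B)+\tfrac12$. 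For $c=3$ the elementary block has $k=7$ with extra vertices $b_1\in N(u)\setminus N(v)$ and $b_2\in N(v)\setminus N(u)$, and I would follow Cases~5.2.1--5.2.3 with $d(b_1),d(b_2)\le4$; whenever a $b_i$ or an $a_i$ has degree $5$ I again absorb one of its outside neighbours (of degree $\le3$) and, if need be, iterate, so that the block finally obtained has $\sum_x(d(x)-4)\le1$, while in the genuinely bounded-degree subcases one bounds $w(B)$ directly from the degree estimates of Lemma~\ref{lemma_w}.

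The delicate point --- and where the ``one certain subcase'' must be excluded --- is $c=3$ with $d(b_1)=d(b_2)=4$ (the equality situations of Cases~5.2.2(III) and 5.2.3), since then $u,v,a_1,a_2,a_3,b_1,b_2$ already have degree sum $30$ and the elementary block alone only gives $w(B)=15>2\cdot7+\tfrac12$. The main obstacle is to recover slack: each $d(b_i)=4$ forces $b_i$ to have two neighbours among $a_1,a_2,a_3$, so planarity places $b_1,b_2$ in distinct faces, which forbids various edges among $\{a_1,a_2,a_3,b_1,b_2\}$ and, in every admissible configuration but one, forces a degree-$5$ vertex to reach a vertex outside $B$; absorbing those degree-$\le3$ vertices pushes $\sum_x(d(x)-4)$ down to at most $1$. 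The single surviving configuration is the one in which $B$ closes up into a connected component --- a triangulation on $7$ vertices with $15$ edges (degree sequence $(5,5,5,4,4,4,3)$) carrying the $5$-$5$ edge, the analogue of the seven-vertex components met in Cases~2--4 of this proof --- and this is exactly the excluded subcase, dealt with separately since it forces $n=7$ and $e(G)=15$.
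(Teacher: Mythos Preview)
Your reformulation $w(B)\le 2k+\tfrac12 \Leftrightarrow \sum_{x\in V(B)}(d(x)-4)\le 1$ is correct and useful, and for $c=4$ your plan of specialising Case~5.1 of Lemma~\ref{lemma_w} to $s=s'=\mathbf 1_B=0$ does work --- the paper itself says so. The gap is in the $c=3$ case.

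First, you cannot simply ``follow Cases~5.2.1--5.2.3'' of Lemma~\ref{lemma_w}: those cases are organised by the values $d_{\mathcal B}(b_i)$, and with $|\mathcal B|=1$ only the subcase $d_{\mathcal B}(b_1)=d_{\mathcal B}(b_2)=1$ applies, which in the lemma is proved by chained reductions whose final bounds are of the form $\tfrac52 k-\tfrac52$. For $k=7,8,9$ this is $15,\,17.5,\,20$, strictly above the $14.5,\,16.5,\,18.5$ you need. The paper explicitly flags this: ``there are too many subcases\ldots\ here, we provide a new proof'', and then reorganises the analysis by the pair $(d(b_1),d(b_2))$ rather than by $d_{\mathcal B}(b_i)$, obtaining the sharper bounds directly. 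Your sentence ``whenever a $b_i$ or an $a_i$ has degree $5$ I again absorb\ldots\ so that the block finally obtained has $\sum_x(d(x)-4)\le 1$'' is an assertion, not an argument; it is precisely this assertion that the paper's fresh case analysis for $c=3$ is there to justify.

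Second --- and this is the sharper error --- you have misidentified the ``one certain subcase''. In the configuration $d(b_1)=d(b_2)=4$, $b_1b_2\in E(G)$, $N(b_1)\cap S_1=N(b_2)\cap S_1=\{a_1,a_3\}$, $d(a_1)=5$, there are \emph{two} bad possibilities. If $d(a_2)=3$ you get your $7$-vertex component with $15$ edges and degree sequence $(5,5,5,4,4,4,3)$. But if $d(a_2)=4$ then $a_2$ has a unique outside neighbour $a'_2$, $a_2a'_2$ is a cut edge, and the seven vertices already have degree sum $31$, so $\sum_x(d(x)-4)=3$; absorbing $a'_2$ (degree $\le 3$) drops this only to $2$, and you cannot absorb further since any neighbour of $a'_2$ across the cut has just one neighbour in the block and hence is not potential if its degree is $4$. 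So the claim genuinely fails here too, and it does \emph{not} force $n=7$ --- on the contrary it forces $n\ge 8$. The paper deals with this separately by exploiting the cut edge: $e(G)=e(B)+1+e(G\setminus B)\le 15+1+3(n-7)-6\le \tfrac52 n-5$ for $n\le 12$, and a trivial degree count for $n\ge 13$. Your proposal misses this branch entirely.
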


\begin{proof}
    If there are $4$ triangles sitting on $uv$, as shown in Figure \ref{fig_55}$(a)$, it can be determined based on the proof for Case 5.1 of Lemma~\ref{lemma_w}.

    Then it remains to prove the case that there are $3$ triangles sitting on $uv$, as shown in Figure~\ref{fig_55_b_1}$(a)$. However, there are too many subcases  and it is tedious to check each one individually. Here, we provide a new proof.
    
    Let $uv$ be the $5$-$5$ edge in $G$. Let $S=\{u, v, a_{1}, a_{2}, a_{3}, b_{1}, b_{2}\}$, $S_{1}=\{a_{1}, a_{2}, a_{3}\}$ and $H_{1}=G[S_{1}]$. Without loss of generality, we may assume that there does not exist vertex of degree  $6$ in $G$.
    
    \noindent\textbf{(I).} $d(b_{1}), d(b_{2})\leq 3$.
    
    Note that $a_{2}, a_{3}$ can not both be the vertices of degree $5$. In fact, if $d(a_{2})=5$, then $a_{2}b_{1}, a_{2}b_{2}\in E(G)$, which implies $d(a_{3})\leq 4$. Thus $d(a_{2})+d(a_{3})\leq 9$.

    Assume $d(a_{3})=5$. If $d(a_{2})\leq 3$, then $w(B)\leq \frac{1}{2}(5\cdot 4+3\cdot 3)=\frac{5}{2}\cdot 7-3$. If $d(a_{2})=4$, there exists an edge $a_{2}a'_{2}\in E(G)$. Let $B'=B+a'_{2}$. Hence $w(B')\leq \frac{1}{2}(5\cdot 4+4+3\cdot 3)=\frac{5}{2}\cdot 8-\frac{7}{2}$.

    Hence we may assume that $d(a_{3})\leq 4$ and $d(a_{2})\leq 4$. Then $w(B)\leq \frac{1}{2}(5\cdot 3+4\cdot 2+3\cdot 2)=\frac{5}{2}\cdot 7-3$. 

    \noindent\textbf{(II).} $d(b_{1})=4, d(b_{2})\leq 3$.
    
    Note that $b_{1}$ has two neighbors in $S_{1}$, as shown in Figure~\ref{fig_55_c_1}$(a)$. Assume that $a_{1}b_{1}, a_{3}b_{1}\in E(G)$. Then $a_{1}, a_{3}$ have no neighbor outside of $B$ and $b_{1}, a_{2}$ are in different regions.

    Since $d(b_{1})=4$, there exists an edge $b_{1}b'_{1}\in E(G)$ with $d(b'_{1})\leq 3$. 
    
    If $b'_{1}=b_{2}$, then $a_{1}a_{3}\notin E(G)$ and $b_{2}a_{1},b_{2}a_{3}$ can not both be edges, which implies $d(a_{1})+d(a_{3})\leq 8$. So $w(G)\leq \frac{1}{2}(5\cdot 2+4\cdot 2+3+8)= \frac{5}{2}\cdot 7-3$.
    
    If $b'_{1}\neq b_{2}$. If $d(a_{1})+d(a_{2})+d(a_{3})= 14$, then we can assume $d(a_{3})=5$ and $d(a_{2})=4$, so the graph is shown in~\ref{fig_55_c_1}$(b)$. Let $b_{1}b'_{1}, b_{2}b'_{2}\in E(G)$, and $B'=B+b'_{1}+b'_{2}$. We have $w(B')\leq \frac{1}{2}(5\cdot 4+4\cdot 2+3\cdot 3)=\frac{5}{2}\cdot 9-4$. Hence we may assume that $d(a_{1})+d(a_{2})+d(a_{3})\leq 13$. Let $B'=B+b'_{1}$. It is obtained that $w(B')\leq \frac{1}{2}(5\cdot 2+4+3\cdot 2+13)=\frac{5}{2}\cdot 8-\frac{7}{2}$.

    \noindent\textbf{(III).} $d(b_{1})=d(b_{2})=4$.
    
    There exist edges $b_{1}b'_{1}, b_{2}b'_{2}\in E(G)$, as shown in Figure~\ref{fig_55_d_1}.
    
    \noindent\textbf{(i).} $b'_{1}=b_{2}$ and $b'_{2}=b_{1}$.
    
    Note that $a_{1}a_{3}\notin E(G)$, which implies $d(a_{3})=4$.

    Assume that $d(a_{1})=5$. We have $a_{1}a_{2}\in E(G)$. If $d(a_{2})=4$, there exists an edge $a_{2}a'_{2}$. Then $a_{2}a'_{2}$ is a cut edge and  $V(G)\geq 8$. We will show later that if $G$ contains this subgraph, then $e(G)\leq \frac{5}{2}n-5$. If $d(a_{2})=3$. Then $B$ is a connected component on $7$ vertices with $15$ edges.

    Hence $d(a_{1})=4$. Then $a_{1}a_{2}\notin E(G)$, which implies $d(a_{2})\leq 3$. It follows $w(B)\leq \frac{29}{2}=\frac{5}{2}\cdot 7-3$.

    \noindent\textbf{(ii).} $b'_{1}=b'_{2}$.
    
    Similarly, we have $a_{1}a_{3}\notin E(G)$ and  $d(a_{3})=4$.

    If $d(a_{2})=4$, there exists an edge $a_{2}a'_{2}$ with $d(a'_{2})\leq 3$. Let $B=B+a'_{2}+b'_{1}$. It follows  $w(B')\leq \frac{1}{2}(5\cdot 3+4\cdot 4+3\cdot 2)=\frac{5}{2}\cdot 9-4$.

    If $d(a_{2})\leq 3$, let $B=B+b'_{1}$. It follows 
    $w(B')\leq \frac{1}{2}(5\cdot 3+4\cdot 3+3\cdot 2)=\frac{5}{2}\cdot 8-\frac{7}{2}$.
    
    \noindent\textbf{(iii).} $b'_{1}\neq b'_{2}$.
    
    There are two possible planar embeddings, as show in Figure~\ref{fig_55_d_1}.

    For the first planar embedding, $b_{1}, b_{2}$ are in a different region from $a_{2}$. We will show that $d(a_{1})+d(a_{2})+d(a_{3})\leq 13$. In fact, if $d(a_{3})=5$, then $a_{1}a_{3}\in E(G)$ and $a_{1}a_{2}\notin E(G)$, which means that $d(a_{2})\leq 3$.
    Let $B'=B+b'_{1}+b'_{2}$. Then $w(B')\leq \frac{1}{2}(5\cdot 2+4\cdot 2+3\cdot 2+13)=\frac{5}{2}\cdot 9-4$.

    For the second planar embedding, $a_{2}, b_{2}$ are in a different region from $b_{1}$. It is easy to check that $d(a_{1})\leq 5, d(a_{2})\leq 4, d(a_{3})\leq 4$.
    Let $B'=B+b'_{1}+b'_{2}$. Similarly we have $w(B')\leq \frac{37}{2}=\frac{5}{2}\cdot 9-4$.

    Now we prove that if $G$ contains such subgraph in (i), then $e(G)\leq \frac{5}{2}n-5$. When $n\geq 13$, we have $e(G)\leq w(B)+\frac{1}{2}\cdot 4(n-7)\leq 2n+\frac{3}{2}\leq \frac{5}{2}n-5$.

    Recall that $G$ is connected and $a_{2}a'_{2}$ is a cut edge. When $n\leq 12$, we may assume that there are two components $G_{1}, G_{2}$ connected by $a_{2}a'_{2}$. Then 
    \begin{align*}           
        e(G)&=e(G_{1})+e(G_{2})+1\\
        &=e(G_{2})+16\\
        &\leq 3(n-7)-6+16\\
        &=3n-11\\
        &\leq \frac{5}{2}n-5.
    \end{align*}

   Therefore if $G$ contains this certain type of $5$-$5$ edge, $e(G)\leq \frac{5}{2}n-5$.
\end{proof}

Hence, 
\begin{align*}
    e(G)&=w(B)+w(G\backslash B)\\
     &\leq \frac{5}{2}k-\frac{k-1}{2}+\frac{1}{2}\cdot 4(n-k)\\
    &\leq 2n+\frac{1}{2}.
\end{align*}

Similarly, we have $e(G)\leq 2n$ when $8\leq n\leq 10$ and $e(G)\leq \frac{5}{2}n-5$ when $n\geq 11$.

\noindent\textbf{Case 4.} $G$ contains a $5$-$4$-$5$ path or $5$-$4^{-}$ star.

Based on the proof in Lemma~\ref{lemma_w}, it is easy to see that $w(B)\leq \frac{5}{2}k-\frac{k-1}{2}$. Similarly, $e(G)$ satisfies the upper bound.

Note that $2n=\frac{5}{2}n-5$ when $n=10$. Therefore the lemma holds.

\section{Proof of Lemma~\ref{lemma_eq2} and Theorem~\ref{thm}}

In this section, we first prove the Lemma~\ref{lemma_eq2}, and then provide the proof of upper bound in theorem~\ref{thm}.

\begin{proof}
    
Note that if there is a vertex of degree at least $5$, then it must be contained in some star-block. Since $G$ has a star-block partition $G=G_{1}+G_{2}$, it follows that
$$e(G)=w_{0}(G_1)+w_{0}(G_{2})\leq w_{0}(G_1)+\frac{1}{2}\sum\limits_{v\in V(G_{2})}4=w_{0}(G_1)+2v(G_{2}).$$

It suffices to prove that
$$w_{0}(G_1)\leq \frac{5}{2}v(G_{1})-5.$$

Recall that $G_{1}$ has a star-block base $\mathcal{B}$. Let

\begin{itemize}
    \item $r_{1}\coloneqq \big|\{v\in V(G_{1}): d_{\mathcal{B}}(v)=2\ and\ d(v)\leq 3\}\big|$,
    
    \item $r_{2}\coloneqq \big|\{v\in V(G_{1}): d_{\mathcal{B}}(v)=2\ and\ d(v)=4\}\big|$,
    
    \item $r_{3}\coloneqq \big|\{v\in V(G_{1}): d_{\mathcal{B}}(v)=3\}\big|$.
\end{itemize}

Then we have
$$\sum\limits_{B\in \mathcal{B}}w_{0}(B)=w_{0}(G_{1})+\frac{3}{2}r_{1}+2r_{2}+3r_{3}.$$

Let $t_{0}=|\mathcal{B}_{0}|$, $t_{1}=|\mathcal{B}_{1}|$, $t_{2}=|\mathcal{B}_{2}|$. By Lemma~\ \ref{lemma_w}, we have
\begin{align*}
    \sum\limits_{B\in \mathcal{B}}w_{0}(B)&=\sum\limits_{B\in \mathcal{B}}(w(B)-\frac{s}{2}-\frac{s'}{4}-\mathbf{1}_{B})\\
    &=\sum\limits_{B\in \mathcal{B}}(w(B))-\sum\limits_{B\in \mathcal{B}}(\frac{s}{2}+\frac{s'}{4}+\mathbf{1}_{B})\\
    &\leq \frac{5}{2}\sum\limits_{B\in \mathcal{B}}v(B)-\frac{5}{2}t_{0}-\frac{5t_{1}}{t}-t_{2}-(r_{1}+\frac{1}{2}r_{2}+\frac{3}{2}r_{3}+t_{2})\\
    &=\frac{5}{2}(v(G_{1})+r_{1}+r_{2}+2r_{3})-\frac{5}{2}t_{0}-\frac{5t_{1}}{t}-t_{2}-(r_{1}+\frac{1}{2}r_{2}+\frac{3}{2}r_{3}+t_{2}).
\end{align*}

Combining the results, we conclude that
$$w_{0}(G_{1})\leq \frac{5}{2}v(G_{1})+(\frac{1}{2}r_{3}-t_{2})-\frac{5}{2}t_{0}-\frac{5t_{1}}{t}-t_{2}.$$

Next we will show that $r_{3}\leq 2t_{2}-4$ when $t_{2}\neq 0$. Recall that $r_{3}$ is the number of shared vertices in $G_{1}$ with $d_{\mathcal{B}}(v)=3$ and $t_{2}$ is the number of star-blocks in $\mathcal{B}$ containing such type of vertices. We construct an auxiliary bipartite graph $(X, Y)$ such that $|X|=r_{3}$ and $|Y|=t_{2}$. Each vertex in $X$ represents a vertex $v$ in $G_{1}$ with $d_{\mathcal{B}}(v)=3$ and each vertex in $Y$ represents a star-block in $\mathcal{B}_{2}$. Moreover, the edge $xy$ means that $x$ is contained in the star-block $y$ for $x\in X$ and $y\in Y$. It is easy to check that the auxiliary bipartite graph is a planar graph since $G$ is a planar graph and every vertex in $X$ has degree exactly $3$. So we have $3r_{3}\leq 2(r_{3}+t_{2})-4$. This implies $r_{3}\leq 2t_{2}-4$.

Hence we have
\begin{align*}
    w_{0}(G_{1})\leq \frac{5}{2}v(G_{1})-2\cdot \mathbf{1}_{t_{2}}-\frac{5}{2}t_{0}-\frac{5t_{1}}{t}-t_{2},
\end{align*}
where $\mathbf{1}_{t_{2}}$ is the characteristic function of $t_{2}$.

If $t_{2}>0$, then $t_{2}\geq 3$. It follows that $w_{0}(G_{1})\leq \frac{5}{2}v(G_{1})-5$. Assume that $t_{2}=0$. If $t_{1}>0$, then $w_{0}(G_{1})\leq \frac{5}{2}v(G_{1})-5$ since $t_{1}=t_{1}+t_{2}=t$. If $t_{1}=t_{2}=0$, then $w_{0}(G_{1})\leq \frac{5}{2}v(G_{1})-5$ since $t_{0}=|\mathcal{B}|\geq 2$.

It should be noted that we did not consider  vertices of degree 2 in our proof. In fact, the calculations for $w(B)$ were based on the assumption that all vertices in $B$ have degree at least $3$. Assume there exists a $2$-degree vertex $w$ in $G$. When $d_{\mathcal{B}}(w)=1$, it can be easily verified. When $d_{\mathcal{B}}(w)=2$, we show that this will not affect our proof either. 

Note that the core of the proof is the following inequality
$$w_{0}(G_{1})+\frac{3}{2}r_{1}+2r_{2}+3r_{3}\leq \sum\limits_{B\in \mathcal{B}}(w(B)-\frac{s}{2}-\frac{s'}{4}-\mathbf{1}_{B}).$$

If there exists a shared vertex $w$ of degree $2$, then the value of $w_{0}(G_{1})+\frac{3}{2}r_{1}+2r_{2}+3r_{3}$ would be reduced by $1/2$. However $w(B)$ would be reduced by $1/2$ if $w\in B$. And there are two star-blocks containing $w$. Thus $\sum\limits_{B\in \mathcal{B}}(w(B)-\frac{s}{2}-\frac{s'}{4}-\mathbf{1}_{B})$ would be reduced by at least $1$. Hence, ignoring each shared vertex of degree $2$ results in an increase of $\frac{1}{2}$ to the value on the left side of the inequality, and an increase of $1$ to the right side. This does not affect our calculation above.
\end{proof}

Now we give the proof of  Theorem~\ref{thm}.
\begin{proof}
    Given any planar graph $G$ on $n\leq 7$ vertices, there does not exist an $S_{3,3}$ obviously. Thus $e(G)\leq 3n-6$.

    By Lemma~\ref{lemma_eq1} and Lemma~\ref{lemma_eq2}, it remains to prove the theorem for $|\mathcal{B}|=0$. This means  there is no star-block contained in $G$.  Thus we have $\Delta(G)\leq 4$.
    Hence $e(G)=\sum\limits_{v\in V(G)}d(v)\leq 2n$.  
    When $n\geq 10$, we have $e(G)\leq \frac{5}{2}n-5$.

    Therefore the proof is completed.
\end{proof}

\section{Construction of Extremal Graphs}

In the previous sections, we have shown the upper bound of edges for $S_{3,3}$-free planar graphs. Now we shall complete it by demonstrating that this bound is tight.

If $3\leq n\leq 7$, any $n$-vertex maximal planar graph is the extremal graph. If $n=8, 9$, the $4$-regular planar graph is the extremal graph, as shown in Figure~\ref{fig_e_1}$(a)(b)$. Moreover, the extremal graph can also be constructed by some star-block, as shown in Figure~\ref{fig_e_1}$(c)$.

\begin{figure}[ht]
  \centering  \includegraphics[width=0.95\textwidth]{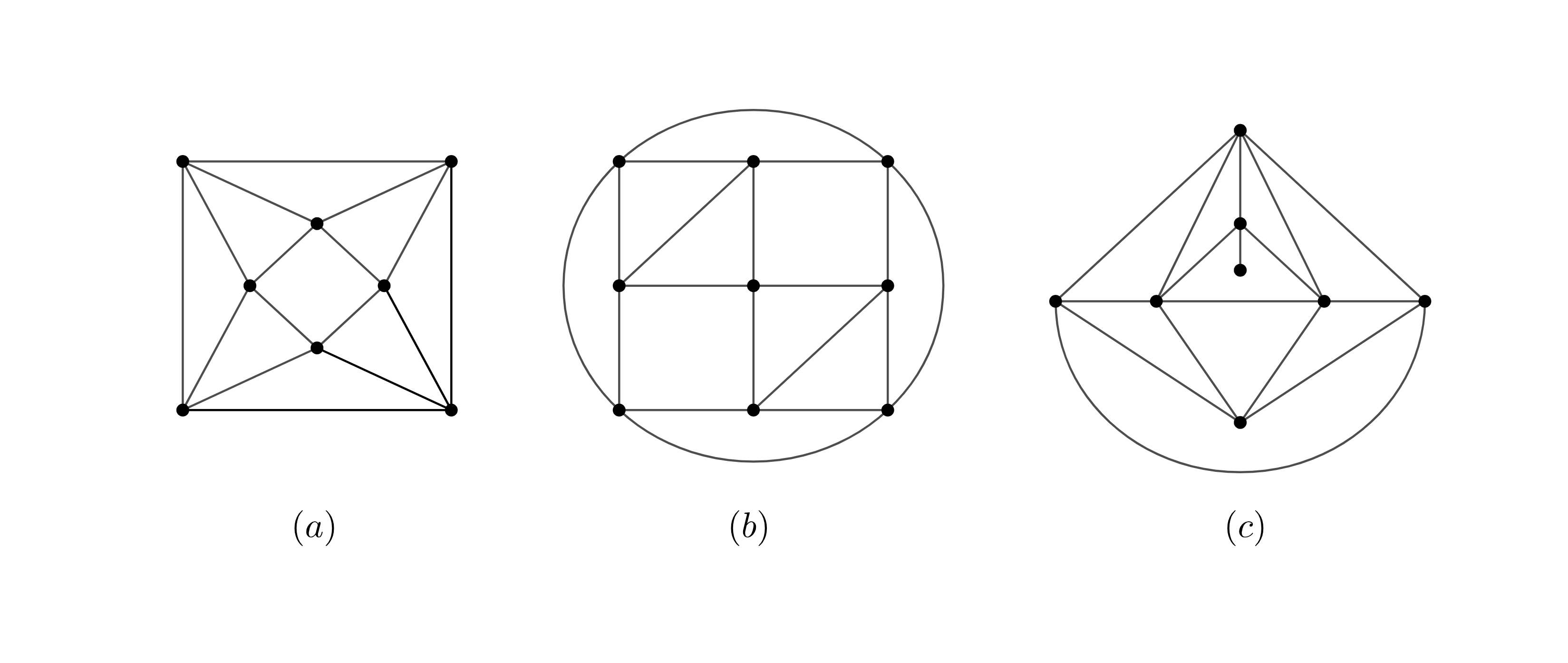}
  \caption{Extremal graphs on $8$ or $9$ vertices. }
  \label{fig_e_1}
\end{figure}

Next we assume that $n\geq 10$. Let $G=G_{1}+G_{2}$ be a star-block partition, where $G_{1}$ is the union of star-blocks and all vertices in $G_{2}$ have degree at most $4$. We also use $t_{0}, t_{1}, t_{2}$ to denote the size of $\mathcal{B}_{0}, \mathcal{B}_{1}, \mathcal{B}_{2}$. Let $t=t_{1}+t_{2}$.

It is known that if $t>0$, the number of edges in extremal graphs must attain the equality in Lemma~\ref{lemma_eq2}.

If $t_{2}>0$, the equation holds when $t_{0}=t_{1}=|V(G_{2})|=0$ and $t_{2}=3$. This means all shared vertices are of degree $3$ and there are exactly $3$ star-blocks. Since each shared vertex is connected with all star-blocks, the number of shared vertices is at most $2$, according to the property of planar graph. It can be checked that the possible star-block is $6$-$5$ edge, or $6$-$4$ edge, or $5$-$5$ edge. However we obtain that $w(B)$ will be reduced strictly here. Thus there does not exist such extremal graph.

Assume that $t_{2}=0$. If $t_{1}>0$, the equation holds when $t_{0}=|V(G_{2})|=0$ and $t_{2}=2$. This means that there are exactly $2$ star-blocks and all shared vertices are of degree at most $3$. Here we construct two extremal graphs by combining different star-blocks. The first extremal graph is obtained by merging a $6$-$6$ edge and a $6$-$5$ edge, as shown in Figure~\ref{fig_e_2}$(a)$. There are $13$ vertices and $27$ edges. 

\begin{figure}[ht]
  \centering  \includegraphics[width=0.95\textwidth]{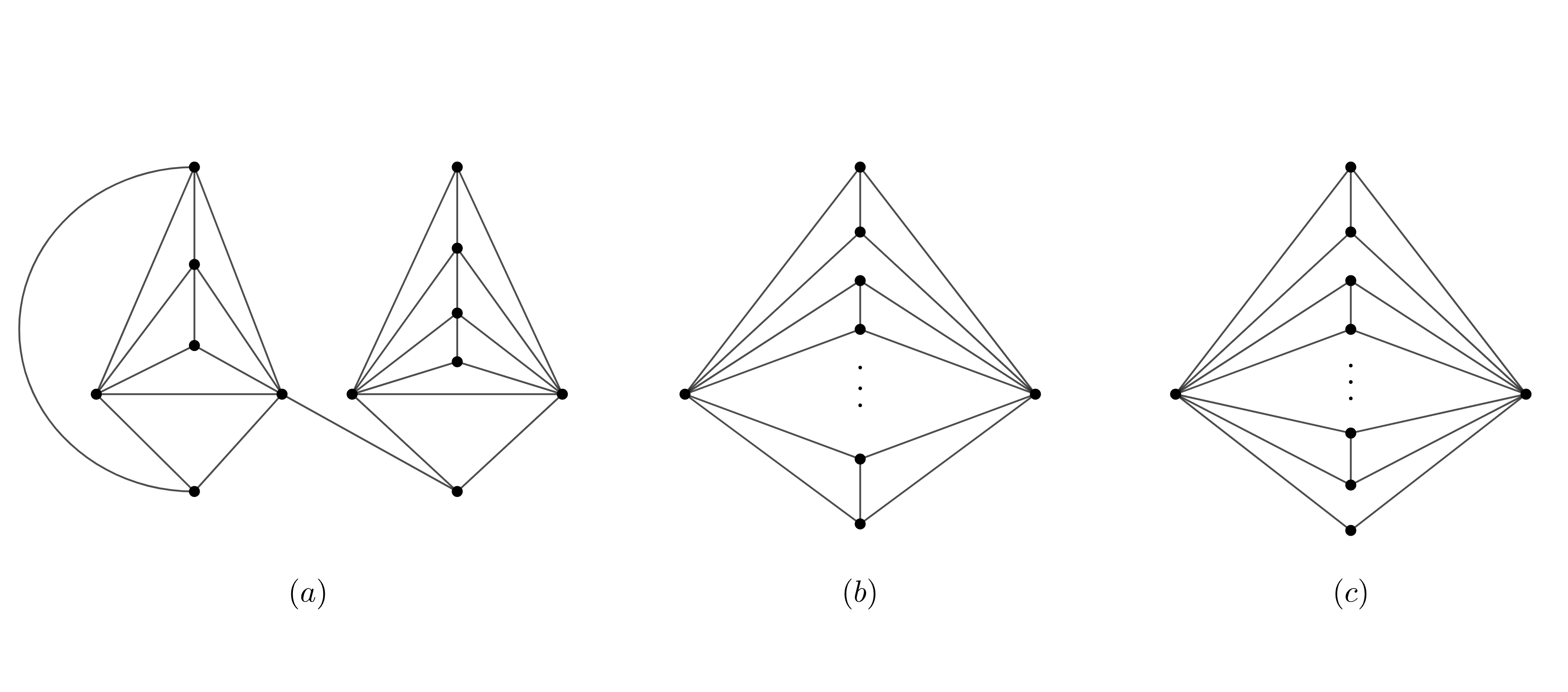}
  \caption{Extremal graphs constructed by merging two star-blocks. }
  \label{fig_e_2}
\end{figure}

The second extremal graph is constructed by two $k$-$3^{-}$ stars, where $k=n-2$. When $n$ is even, the graph obtained from two $k$-$3$ stars is the extremal graph, as shown in Figure~\ref{fig_e_2}$(b)$. When $n$ is odd, the extremal graph is constructed by two $k$-$3^{-}$ stars, where in each star-block, $k-1$ peripheral vertices have degree $3$ and one has degree $2$, as shown in Figure~\ref{fig_e_2}$(c)$. Moreover, $e(G)=\lfloor 5n/2\rfloor-5$.

\begin{figure}[ht]
  \centering  \includegraphics[width=0.65\textwidth]{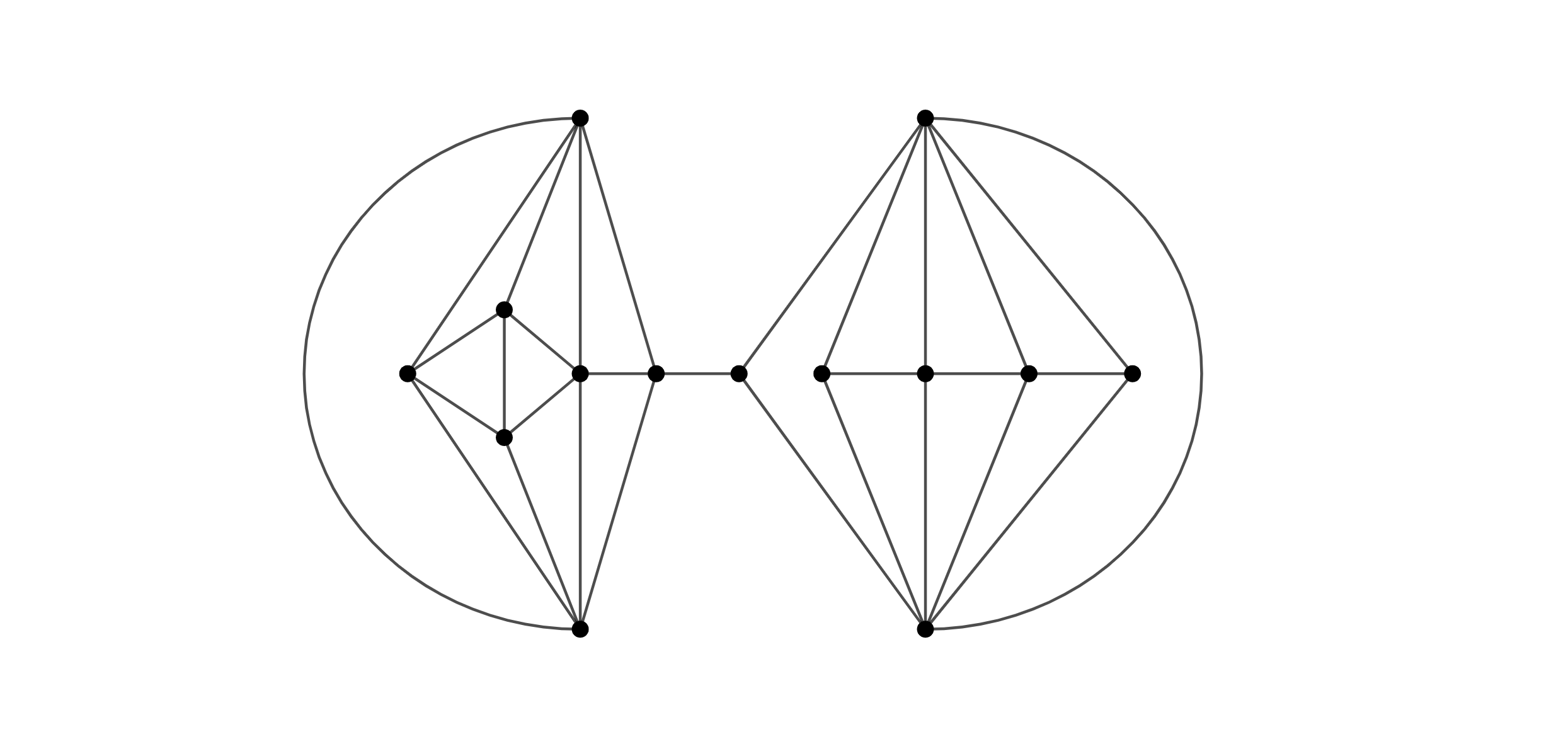}
  \caption{Extremal graphs on $14$ vertices and $30$ edges. }
  \label{fig_e_3}
\end{figure}

Finally, we assume that $ t_{1}=t_{2}=0$. If $t_{0}=1$, the graph in Figure~\ref{fig_65}$(b)$ is an example on $7$ vertices. It remains to discuss the case when $t_{0}=2$. Here we give an extremal graph by connecting two star-blocks, as shown in Figure~\ref{fig_e_3}. Furthermore, if $n=13, 14$, there are extremal graphs that are disconnected. Let $G_{1}, G_{2}$ be the two connected components, where $G_{1}$ is the $7$-vertex planar triangulation and $G_{2}$ is the $6$-vertex or $7$-vertex planar triangulation. It is noticed that there are five $7$-vertex maximal planar graphs, each contains some star-block discussed above.

\section{Remark}

If someone can use a computer to enumerate and verifies Conjecture~\ref{conj} holds for $n\leq 27$, the proof would be much simpler by slightly modifying the induction-based approach of Ghosh, Gy\H{o}ri, Paulos and Xiao~\cite{ghosh2022planar}.

Now we can assume that Conjecture~\ref{conj} holds for $n\leq m-1$, where $m\geq 28$, and let $G$ be an $m$-vertex $S_{3,3}$-free graph. By inductive hypothesis we may assume that $\delta(G)\geq 3$ and $G$ contains no $3$-$3$ edge. Using the same proof of Ghosh, Gy\H{o}ri, Paulos and Xiao~\cite{ghosh2022planar}, we can assume that $G$ contains no $6$-$6$ edge, $6$-$5$ edge, $6$-$4$ edge. Since $G$ is $S_{3,3}$-free, $G$ contains no $7^{+}$-$4^{+}$ edge. Now $G$ is shown in Figure~\ref{fig_rmk}.

\begin{figure}[ht]
  \centering  \includegraphics[width=0.8\textwidth]{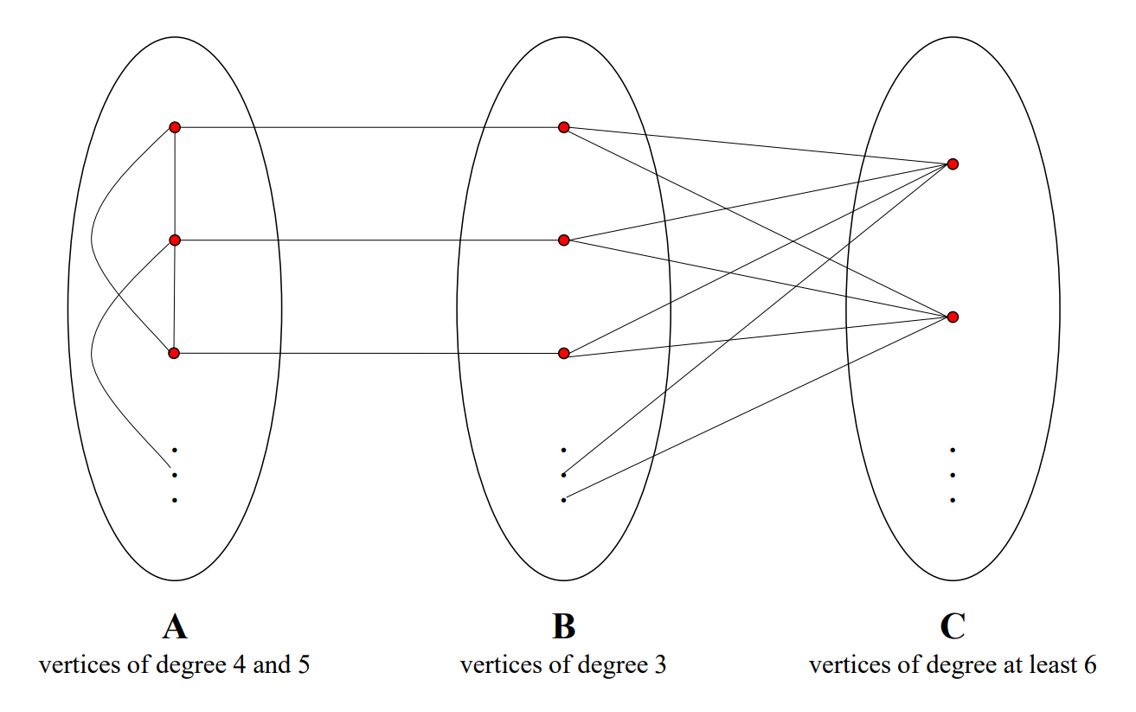}
  \caption{$G$ can be partitioned into three parts $A$, $B$, $C$.}
  \label{fig_rmk}
\end{figure}

Let $m_{3},m_{4},m_{5}$ be the number of vertices of degree $3$, $4$, $5$, respectively. Let $m_{6}$ be the number of vertices of degree at least $6$. Then we have $m_{3}+m_{4}+m_{5}+m_{6}=m$. Let $x$ be the number of edges between $A$ and $B$.

If $m_{6}\geq 2$,
\begin{align*}
    e(G)&=3m_{3}+\frac{1}{2}(4m_{4}+5m_{5}-x)\\
    &=\frac{5}{2}(m-m_{6})+\frac{1}{2}m_{3}-\frac{1}{2}m_{4}-\frac{1}{2}x\\
    &\leq \frac{5}{2}m-\frac{3}{2}m_{6}-\frac{1}{2}m_{4}-2\\
    &\leq \frac{5}{2}m-5,
\end{align*}
where the first inequality follows from counting the number of edges between $B$ and $C$, which is $3m_{3}-x\leq 2(m_{3}+m_{6})-4$, where the right side is the maximum number of edges in the planar bipartite graph induced by $B$ and $C$.

If $m_{6}= 1$,
\begin{align*}
    e(G)&=3m_{3}+\frac{1}{2}(4m_{4}+5m_{5}-x)\\
    &=\frac{5}{2}(m-m_{6})+\frac{1}{2}m_{3}-\frac{1}{2}m_{4}-\frac{1}{2}x\\
    &\leq \frac{5}{2}m-\frac{1}{2}m_{3}-\frac{1}{2}m_{4}-\frac{5}{2},
\end{align*}
where the last inequality follows from counting the number of edges between $B$ and $C$, which is $3m_{3}-x\leq m_{3}$. If $m_{3}+m_{4}\geq 5$ then we are done, otherwise $m_{3}+m_{4}\leq 4$, which means $|B|=m_{3}\leq 4$. However, the vertex in $C$ is adjacent to at least $6$ vertices in $B$, a contradiction.

If $m_{6}= 0$, it is easy to check that every $5$-vertex is adjacent to at least $2$ vertices of degree at most $3$ since $G$ is $S_{3,3}$-free. So we have $2m_{5}\leq 3m_{3}+4m_{4}$. Then $e(G)=\frac{1}{2}(3m_{3}+4m_{4}+5m_{5})=\frac{5}{2}m-\frac{1}{2}m_{4}-m_{3}$. If $2m_{3}+m_{4}\geq 10$ then we are done, so we may assume that $2m_{3}+m_{4}\leq 9$. Now $m=m_{3}+m_{4}+m_{5}\leq \frac{5}{2}m_{3}+3m_{4}\leq 27-\frac{7}{2}m_{3}\leq 27$, a contradiction.

Therefore the proof is completed.

\section*{Acknowledgments} 
Tong Li and Guiying Yan are partially supported by National Natural Science Foundation of China (Grant No. 12301459,  11631014).

\bibliographystyle{abbrv}
\bibliography{main}

\end{sloppypar}
\end{document}